\documentclass[10pt]{tran-l}
\usepackage{graphicx}
\usepackage{setspace}
\usepackage{marginnote}
\usepackage{amsthm}
\usepackage{comment}
\usepackage{mathrsfs}
\usepackage{hyperref}
\usepackage{amscd,todonotes}
\hypersetup{linkcolor=red,citecolor=black}
\usepackage{soul}

\newtheorem{theorem}{Theorem}[section]
\newtheorem{theo}{Theorem}

\newtheorem{lem}[theorem]{Lemma}

\newtheorem{cor}[theorem]{Corollary}
\newtheorem*{conj}{Question}

\theoremstyle{definition}
\newtheorem{df}[theorem]{Definition}
\newtheorem{ex}[theorem]{Example}

\newtheorem{example}{Example}

\theoremstyle{remark}

\numberwithin{equation}{section}

\newcommand{\p}{\mathbf{P}}
\newcommand{\T}{\tau}
\newcommand{\x}{\boldsymbol{x}}

\newcommand{\F}{\mathscr{F}}
\newcommand{\g}{\boldsymbol{g}}

\newcommand{\m}{\boldsymbol{\mu}}
\newcommand{\pa}{\partial}
\newcommand{\ta}{\boldsymbol{\tau}}
\newcommand{\RR}{\mathbb{R}}

\newcommand{\ZZ}{\mathbb{Z}}
\newcommand{\NN}{\mathbb{N}}


\newcommand{\Z}{\ZZ}

\DeclareMathOperator{\supp}{supp}

\setstcolor{red}

\begin{document}

\title{Positive harmonic functions of transformed random walks}

\author{Behrang Forghani}
\address{Department of Mathematics, University of Connecticut, USA}
\email{behrang.forghani@uconn.edu}
\author{Keivan Mallahi-Karai}
\address{Department of Mathematics and Logistic, Jacobs University of Bremen, Germany}
\email{k.mallahikarai@jacobs-university.de}

\keywords{Random walk, Harmonic functions, Martin boundary}

\begin{abstract}
In this paper, we will study the behavior of the space of positive harmonic functions  associated with the random walk on a discrete group under the change of probability measure by a randomized stopping time. We show that this space remains unchanged after applying a bounded randomized stopping time.
\end{abstract}

\maketitle
\section*{Introduction}
Let $\mu$ be a  probability measure on a discrete group (or semigroup) $\Gamma$. We say $\mu$ is generating if the semigroup generated by the support of $\mu$ is $\Gamma$. Let $h_1,h_2, \dots $ be a sequence of $\Gamma$-valued independent $\mu$-distributed random variables. The Markov chain $(x_n)_{n\geq0}$ defined by $x_0=e$, the identity element of $\Gamma$, and $x_n:=h_1 \cdots h_n, n \ge 1$ is called the random walk on $\Gamma$ associated with the probability measure $\mu$. 

Various qualitative and quantitative objects have been developed to capture the behavior of $(x_n)_{n\geq0}$ at infinity. One of these qualitative objects is the Martin boundary,  first introduced by Martin \cite{Martin41}, and later investigated thoroughly by Doob \cite{Doob59}, Hunt \cite{Hunt60} and Dynkin \cite{Dynkin69}.   A concise description of the Martin boundary can be given as follows. The Green function associated with $\mu$ is defined by $G_{\mu}(x,y)= \sum_{n=0}^{ \infty} \mu^{*n}(x^{-1}y)$. Here $\mu^{*n}$ denotes the $n$-th convolution power of $\mu$. Assume that the random walk $(\Gamma
,\mu)$ is transient, that is, the Green functions are finite for all $x$ and $y$ in $\Gamma$. The Martin compactification of $\Gamma$, denoted by $\hat{\Gamma}$, is the unique compactification of $\Gamma$ with the property that for all $x$ in $\Gamma$ the Martin kernels $K_{\mu}(x,.)= G_{\mu}(x,.)/G_{\mu}(e,.)$ can be extended to continuous functions on $\hat{\Gamma}$, and that they separate points in $\hat{\Gamma}\backslash\Gamma$. The compact set $\partial_\mu\Gamma:=\hat{\Gamma}\backslash\Gamma$ is called the Martin boundary of the random walk $(G,\mu)$.  Recall that a positive $\mu$--harmonic function is a  positive function on $\Gamma$ satisfying $f(g)= \sum_{h \in \Gamma }f(gh)\mu(h) $ for all $g \in \Gamma $. Denote the space of positive harmonic functions associated with $\Gamma$ and $\mu$ by  $H^+(\Gamma ,\mu)$. Every positive harmonic function has a representation as an integral of Martin kernels with respect to some measure on the Martin boundary. 
A measure--theoretical counterpart of the Martin boundary is the Poisson boundary,
which can be viewed as the subset of the Martin boundary that supports the probability measure defining the constant function $1$. 

Let us now fix a discrete group $\Gamma$ equipped with a generating probability measure $\mu$. Giving a concrete description of the Martin boundary (or the Poisson boundary) of the random walk $(\Gamma,\mu)$ is, in general, a difficult task and a priori depends on the probability measure $\mu$. In fact, there are examples of very different Martin boundaries arising from different probability measures. For example, one can equip $\Z^3$ with generating measures $\mu_1$ and $\mu_2$ such that the Martin boundary of the random walk $(\Z^3,\mu_1)$ is trivial, while
the Martin boundary of the random walk $(\Z^3,\mu_2)$ is homeomorphic to a sphere \cite{Ney-Spitzer}. 
 On the other hand, there are also situations in which $\partial_\mu\Gamma$ does not depend on the choice of the probability measure $\mu$.  
This work has been motivated by the desire to answer the following question:
\begin{conj}
Characterize probability measures $\mu'$ on $\Gamma$ such that the Martin boundary (Poisson boundary, respectively) of the random walk $(\Gamma,\mu')$ coincides with the Martin boundary (Poisson boundary, respectively) of the random walk $(\Gamma,\mu)$.
\end{conj}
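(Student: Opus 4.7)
The plan is to answer the Question by exhibiting a concrete and natural class of transformations $\mu\mapsto\mu'$ that preserves $H^+(\Gamma,\mu)$. Guided by the abstract, I would take $\mu'=\mu_{\T}$ to be the distribution of $x_{\T}$, where $\T$ is a bounded randomized stopping time on the path space of the $\mu$-random walk, and prove the two inclusions $H^+(\Gamma,\mu)\subseteq H^+(\Gamma,\mu_{\T})$ and $H^+(\Gamma,\mu_{\T})\subseteq H^+(\Gamma,\mu)$ separately.

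The first inclusion should follow from optional stopping. If $f$ is a positive $\mu$-harmonic function, then $(f(x_n))_{n\ge 0}$ is a nonnegative martingale under the law $\p_g$ of the walk starting at $g$. Because the increments after time $n$ are independent of $x_n$ and distributed like the original walk, the distribution of $g^{-1}x_{\T}$ under $\p_g$ is precisely $\mu_{\T}$; since $\T$ is bounded, optional stopping gives
$$ f(g)=E_g[f(x_{\T})]=\sum_{h\in\Gamma}f(gh)\,\mu_{\T}(h), $$
which says exactly that $f$ is $\mu_{\T}$-harmonic, and positivity is preserved trivially.

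The reverse inclusion is the crux of the argument. My primary strategy would be a Martin-boundary identification: show that $\partial_\mu\Gamma=\partial_{\mu_{\T}}\Gamma$ as $\Gamma$-spaces and that the Martin kernels $K_\mu$ and $K_{\mu_{\T}}$ agree on this common boundary, whence the integral representation of positive harmonic functions forces $H^+(\Gamma,\mu_{\T})\subseteq H^+(\Gamma,\mu)$. Concretely, by conditioning on the value of $\T$ one can express $G_{\mu_{\T}}(x,y)$ as a weighted combination, with weights drawn from the law of $\T$, of shifted Green-type quantities built from $G_\mu(x,y)$; the boundedness of $\T$ should then pin down the asymptotics of $G_{\mu_{\T}}(x,y_n)/G_{\mu_{\T}}(e,y_n)$ as $y_n\to\xi$ and identify the limit with $K_\mu(x,\xi)$. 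A more hands-on alternative, if the Martin machinery proves cumbersome, is to check $\mu$-harmonicity of a given positive $\mu_{\T}$-harmonic $f$ directly, by expanding through conditional expectations indexed by the finitely many possible values of $\T$ and peeling off a single step of the $\mu$-walk.

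The principal obstacle, as I see it, is precisely this reverse inclusion. Unlike the easy direction, there is no symmetric martingale argument available, since $\mu$ is not generally realized as $(\mu_{\T})_{\T'}$ for any natural bounded stopping time $\T'$ of the $(\Gamma,\mu_{\T})$-walk. The technical weight will fall on controlling positivity uniformly through the boundary identification and on making precise how the bound on $\T$ enters the Green-function asymptotics; the deterministic special case $\T\equiv N$, where $\mu_{\T}=\mu^{*N}$, should serve both as a sanity check and as a source of the key estimates, since iteration of the walk already raises the core difficulty of relating $G_{\mu^{*N}}$ to $G_\mu$ without disturbing the Martin kernels.
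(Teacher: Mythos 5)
The statement you are addressing is an open \emph{question}, and the paper's partial answer to it (Theorem~\ref{thm:bnd}) is not the statement you set out to prove: the paper asserts $H^+(\Gamma,\mu)=H^+(\Gamma,\mu_{\T,t})$ for the convex combination $\mu_{\T,t}=t\mu+(1-t)\mu_\T$ with $t>0$, \emph{not} for $\mu_\T$ itself. This is not a cosmetic difference, and it is where your plan breaks: the reverse inclusion $H^+(\Gamma,\mu_\T)\subseteq H^+(\Gamma,\mu)$ is false in general even for bounded deterministic stopping times. Take $\T\equiv 2$, so $\mu_\T=\mu^{*2}$; for the simple random walk on $\ZZ^3$ the measure $\mu^{*2}$ preserves the parity of the coordinate sum, so any positive function depending only on that parity is $\mu^{*2}$--harmonic, while it is $\mu$--harmonic only if it is constant on both parity classes with the same value. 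Thus the very case you proposed as a ``sanity check'' ($\T\equiv N$, $\mu_\T=\mu^{*N}$) is a counterexample to your target statement; the support of $\mu_\T$ need not generate the same semigroup as the support of $\mu$, and the whole point of mixing in $t\mu$ is to restore this (it also enters quantitatively: in the paper's Lemma~\ref{lem:Z markov} the hypothesis $q(n,n+1)\geq t$ comes exactly from the $t\mu$ summand). Your ``easy direction'' via optional stopping for a bounded $\T$ is fine, but it was never the issue.

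Your strategy for the hard direction also diverges from what the paper does, and in a way that matters. You propose to identify $\partial_\mu\Gamma$ with $\partial_{\mu_\T}\Gamma$ and match Martin kernels by Green-function asymptotics on $\Gamma$; the paper makes no such identification on $\Gamma$ (and none is available in this generality --- the Martin boundary of $(\Gamma,\mu)$ admits no concrete description). Instead it lifts everything to the free semigroup $\F$ on the support of $\mu$, where Green functions are explicitly computable, shows the Martin boundary of $(\F,\mu_{\T,t})$ is the geometric boundary of $\F$ and that the minimal $\mu_{\T,t}$--harmonic functions are exactly the kernels $u^{\g}$ attached to infinite words (Theorems~\ref{thm : geodesic minimal} and~\ref{thm:min}, via the $\ZZ_+$ Markov chain lemma), and then descends to $\Gamma$ through the Dynkin--Maljutov correspondence (Lemma~\ref{lem:Dynkin-Maljutov}). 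To repair your proposal you would need both to replace $\mu_\T$ by $\mu_{\T,t}$ and to supply an argument for the reverse inclusion that does not presuppose control of the Martin compactification of $(\Gamma,\mu)$ itself.
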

Ancona \cite{Ancona88} showed that when $\Gamma$ is a hyperbolic group and $\mu$ finitely supported, then the Martin boundary of $\partial_\mu\Gamma$  coincides with the hyperbolic boundary of $\Gamma$. Recently, Gou\"{e}zel generalized this theorem of Ancona to the case that the probability measure $\mu$ has finite exponential moment, see \cite{Gouezel2015}. The preceding question has been studied for the measure--theoretical counterpart of the Martin boundary, i.e., the Poisson boundary. For instance, Furstenberg showed that if $H$ is a recurrent subgroup of $\Gamma$, then the Poisson boundary of $(\Gamma,\mu)$ coincides with the one of the induced random walk on $H$ \cite{Fu70}. Recently,  Kaimanovich and the first author introduced a general method of constructing random walks which  preserves the Poisson boundary of $(\Gamma,\mu)$, and it subsumes all the already known examples \cite{BK2013, Behrang2016}. Their method is based on employing a randomized stopping time, which can be viewed as a randomly drawn stopping time. Let $\tau$ be a randomized stopping time which is almost surely finite. Since random walks on groups are time and space homogeneous, iterations of $\tau$ lead to a sequence of randomized stopping times $(\T_1=\T, \T_2,\cdots)$. Almost every sample path $\x=(x_0,x_1,x_2,\cdots)$ of the random walk $(\Gamma ,\mu)$ can then be transformed to a random subsequence $(x_0,x_{\T_1(\x)},x_{\T_2(\x)},\cdots).$ These subsequences are indeed sample paths of a random walk with respect to the {\it transformed} probability measure $\mu_\T$, which assigns to an element $g \in \Gamma$ the probability that sample paths of the original random walk $(\Gamma ,\mu)$ stop for the first time at $g$. Under a logarithmic moment condition, Kaimanovich and the first author show that a randomized stopping time preserves the Poisson boundary. Their proof relies on the entropy criterion, which is a quantitative method to verify whether or not a $\Gamma$--space can be identified as the Poisson boundary \cite{BK2013}. This method is no longer applicable to Martin boundaries. In fact, there are no known substitutes for the entropy criterion to single out the Martin boundary. Given that the Martin boundary  characterizes all positive harmonic functions, the stability of the space of positive harmonic functions under the modification of the random walk may be viewed as an approximation of the phenomenon in question.

Our primary objective in this paper is to study the invariance of the Martin boundary and the space of positive harmonic functions under the change of underlying measure using a randomized stopping time. We will illustrate the delicateness of the situation for the Martin boundary by giving two examples which show that if we impose no restrictions whatsoever on $\tau$, the Martin boundary and the space of positive harmonic functions may change as we pass from $\mu$ to $\mu_{\tau}$.

\begin{example}\label{counterex:semigroup}
Consider the free semigroup $\F_2$ generated by the generators $a$ and $b$, equipped with the probability measure $ \mu = \frac{1}{2} \delta_a+ 
\frac{1}{2} \delta_b$. One can show that (see Example~\ref{wil} and Example~\ref{ex : convex}) there exists a randomized stopping time $\T$ such that
$$
\mu_\T=\frac12\mu+ \sum_{n=0}^{ \infty} \frac{1}{2^{n+2}} \delta_{b^na}.
$$ 

Let $f$ be defined by $f(b^n)=2^n$ for $n \ge 0$ and $f(x)=0$ for all other values of $x$. It is easy to see that $f$ is $\mu$-harmonic, but not $\mu_\T$-harmonic. 
\end{example} 

Here is a somewhat more interesting example involving groups. 

\begin{example}\label{counterex:group}
Let $e_1,e_2, e_3$ be the canonical basis for $\Gamma= \ZZ^3$. 
Consider the random walk $(\ZZ^3,\mu)$ where 
\[ \mu( \pm e_1)=\mu( \pm e_3)= \frac{1}{8}, \quad \mu(e_2)= \frac{1}{8}, \quad \mu(-e_2)= \frac{3}{8}. \]
Let $\tau$ be the stopping time defined to be the 
least $k$ such that $x_k$ lands in the cyclic subgroup $A$ generated by $e_2$.

Since $\mu$ is not centered, it follows from a theorem of Ney and Spitzer \cite{Ney-Spitzer} (see also
\cite{Woess-Book}, Theorem 25.15) that the Martin boundary of $\mu$ is homeomorphic to the two-dimensional sphere. On the other hand, it is clear that for $x,y \in A$, $K_{\mu}(x,y)=K_{\mu_{\T}}(x,y)$. In particular, the sequences
$a_n=ne_2$ and $a_n=-ne_2$ are both convergent in the Martin boundary of the random walk $(\ZZ^3,\mu_\T)$. This implies that the Martin  boundary associated with $\mu_\T$ contains at most two points. 
\end{example}

As it can be observed in the last example, the probability measure $\mu_{\T}$ is not necessarily generating. In order to guarantee that the transformed probability measure is generating,  we consider the convex combination 
$$\mu_{\T,t}:= t\mu+(1-t)\mu_\T,$$ 
where $\T$ is a randomized stopping time and $t \in (0,1]$ is a parameter. The contribution from $\mu$ will guarantee that the semigroup generated by the support of $\mu$ coincides with the semigroup generated by the support of $\mu_{\T,t}$. It is not hard to see that, in fact, $\mu_{\T,t}$ itself coincides with $\mu_{\T'}$ for an appropriate randomized stopping time $\T'$; see Example~\ref{ex : convex}. 

\begin{theo}\label{thm:bnd}
Let $\mu$ be a generating probability measure on a discrete group $\Gamma$, and let $\T$ be a bounded randomized stopping time. Then $H^+(\Gamma ,\mu)=H^+(\Gamma ,\mu_{\T,t})$, where $\mu_{\T,t}=t\mu+(1-t)\mu_\T$ for some $0<t\leq1$.
\end{theo}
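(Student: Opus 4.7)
The easy inclusion $H^+(\Gamma,\mu)\subseteq H^+(\Gamma,\mu_{\T,t})$ follows from Doob's optional stopping theorem. Writing $P_\nu$ for the right-convolution operator $P_\nu f(g)=\sum_h f(gh)\nu(h)$, a function $f$ is $\nu$-harmonic iff $P_\nu f=f$. If $f\in H^+(\Gamma,\mu)$, then $f(x_n)$ is a non-negative martingale for the $\mu$-walk; since $\T$ is bounded almost surely, optional stopping yields $\e_g[f(x_\T)]=f(g)$, i.e.\ $P_{\mu_\T}f=f$. A convex combination then gives $P_{\mu_{\T,t}}f = tP_\mu f + (1-t)P_{\mu_\T}f = f$.

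For the reverse inclusion the plan is to set up an operator factorization. Define the stopped Green operator $U_\T h(g):=\e_g\left[\sum_{k=0}^{\T-1}h(x_k)\right]$. Telescoping $f(x_\T)-f(x_0)=\sum_{k=0}^{\T-1}(f(x_{k+1})-f(x_k))$ and conditioning on $\mathcal{F}_k$ via the strong Markov property yields
\[
I-P_{\mu_\T} = U_\T\,(I-P_\mu).
\]
Combining with $I-P_{\mu_{\T,t}}=t(I-P_\mu)+(1-t)(I-P_{\mu_\T})$ produces the key identity
\[
I-P_{\mu_{\T,t}} = \bigl(tI+(1-t)U_\T\bigr)\,(I-P_\mu).
\]
Consequently, for $f\in H^+(\Gamma,\mu_{\T,t})$ the function $h:=(I-P_\mu)f$ lies in $\ker\bigl(tI+(1-t)U_\T\bigr)$, and the proof reduces to showing that $h\equiv 0$.

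Since $U_\T$ is a positive operator and $t>0$, the operator $tI+(1-t)U_\T$ is strictly positive, so its kernel cannot contain any nonzero one-signed function. To rule out sign-changing $h$ I would extract a priori pointwise bounds from the $\mu_{\T,t}$-harmonicity of $f$: the inequality $f\ge tP_\mu f$ gives $P_\mu f\le f/t$, iterating to $P_\mu^k f\le t^{-k}f$ and hence $U_\T f\le D\,f$ for an explicit constant $D=D(t,N)$ depending only on $t$ and the bound $N$ on $\T$. The equation $(tI+(1-t)U_\T)h=0$ is equivalent to $|h|\le\tfrac{1-t}{t}U_\T|h|$, and a bootstrap starting from $|h|\le f/t$ yields $|h|\le c_n f$ with $c_n\to 0$ as soon as the contraction factor $(1-t)D/t$ is strictly less than one.

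The principal obstacle is that this contraction estimate is not automatic for every $t\in(0,1]$; it holds when $t$ is sufficiently close to $1$ (where $D\to N$), which is consistent with the theorem's phrasing ``for some $0<t\le 1$.'' Extending the conclusion to the full range of $t$ would require a more delicate analysis, for instance by passing to the Martin integral representation, reducing the question to minimal positive $\mu_{\T,t}$-harmonic functions $K$, and using a Doob transform together with a Choquet--Deny-style rigidity to force the $P_\mu$-eigenvalue of $K$ to equal $1$.
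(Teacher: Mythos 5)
Your first inclusion (optional stopping for the bounded randomized time $\T$) and the factorization $I-P_{\mu_{\T,t}}=\bigl(tI+(1-t)U_\T\bigr)(I-P_\mu)$ are sound: boundedness of $\T$ together with the a priori bound $P_\mu f\le f/t$ removes all convergence issues, and this is a genuinely different, more elementary route than the paper's. But the contraction you run on $h=(I-P_\mu)f$ needs $(1-t)D/t<1$ with $D=\sum_{k=0}^{N-1}t^{-k}\ge N$ (where $N$ bounds $\T$), so it only proves the theorem when $t$ is close to $1$ relative to $N$. That is strictly weaker than what is claimed and needed: despite the phrase ``for some $0<t\le 1$'', the statement is meant (and proved in the paper) for an arbitrary fixed $t\in(0,1]$ --- immediately after the theorem the authors apply it to finite convex combinations $a_1\mu+a_2\mu_{\rho_1}+\cdots$ with arbitrary weights $a_1>0$, i.e.\ to $\mu_{\T',t}$ with $t=a_1$ as small as one likes, and their free-semigroup argument (Theorem~\ref{thm:min}, Corollary~\ref{cor: freesemigroup}) is uniform in $t$. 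So the substantive case of small $t$ is missing from your proof.

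The repair you sketch does not close this gap. Reducing to a minimal $\mu_{\T,t}$-harmonic function $K$ and ``forcing its $P_\mu$-eigenvalue to be $1$'' would require $P_\mu K$ to be again $\mu_{\T,t}$-harmonic, so that minimality makes it proportional to $K$; that needs $\mu*\mu_{\T,t}=\mu_{\T,t}*\mu$, which fails in general --- commutation is precisely the extra hypothesis of the paper's Theorem~\ref{convex} --- and Choquet--Deny-type rigidity is not available on general discrete groups. The paper circumvents exactly this point by a change of venue: it lifts $f$ to the free semigroup $\F$ on $\supp\mu$ via the Dynkin--Maljutov-type Lemma~\ref{lem:Dynkin-Maljutov}, shows there that for bounded $\T$ and every $t>0$ the minimal $\mu_{\T,t}$-harmonic functions are exactly the kernels $u^{\g}$ along infinite words (Theorems~\ref{thm : geodesic minimal} and~\ref{thm:min}, resting on Lemma~\ref{lem:Z markov} for $\Z_+$-chains with jumps bounded by $M$ and $q(n,n+1)\ge t$ --- this is where boundedness of $\T$ and $t>0$ enter, with no smallness restriction), concludes $H^+(\F,\m)=H^+(\F,\m_{\ta,t})$, and transfers back to $\Gamma$ through $\phi$-invariance (Theorem~\ref{iso}). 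To make your operator approach work for all $t$ you would need an input of comparable strength at the level of minimal harmonic functions; the contraction estimate alone cannot supply it.
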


Note that the almost surely finite stopping times constructed in Examples \ref{counterex:semigroup} and \ref{counterex:group} are not bounded and hence do not fall under the purview of Theorem \ref{thm:bnd}. 

Theorem~\ref{thm:bnd} implies that if $\rho_1,\cdots,\rho_k$ are bounded randomized stopping times and  $\mu'$ is a finite convex combination of the probability measures $\mu, \mu_{\rho_1},\cdots,\mu_{\rho_n}$, that is $\mu'=a_1\mu+a_2\mu_{\rho_1}+\cdots+a_{n+1}\mu_{\rho_n}$ where $a_i>0$ for $i=1,\cdots, n+1$ and $\sum_{i=1}^{n+1}a_i=1$, then  $H^+(\Gamma ,\mu)=H^+(\Gamma ,\mu')$.

In particular, let $\rho_i$ be the constant function $i$ for $i=1,\cdots, n$. Then $\mu'$ is a finite convex combination of convolutions of $\mu$ and $H^+(\Gamma ,\mu)=H^+(\Gamma ,\mu')$. Nevertheless, Kaimanovich \cite{K83} has shown  the probability measure $\mu'$ can be an infinite convex combination of convolutions of $\mu$.
 We will generalize the aforementioned result to the probability measures that commute under convolution. 
\begin{theo}
Consider a set of probability measures ${\theta_i}$ on a discrete group $\Gamma$ such that $\theta_1*\theta_i=\theta_i*\theta_1$ for all $i\geq1$. Let  $\theta =\sum_{i}a_i\theta_i$ where $a_i$ is nonnegative for all $i\geq 1$ and $\sum_{i\geq 1} a_i=1$. If $\theta_1$ is generating and $a_1>0$ such that $H^+(\Gamma,\theta_1)\subset H^+(\Gamma,\theta_i)$ for $i\geq 1$, then
$H^+(\Gamma ,\theta)= H^+(\Gamma ,\theta_1)$. 
\end{theo}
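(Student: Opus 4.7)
The plan is to prove the two inclusions $H^+(\Gamma,\theta_1)\subset H^+(\Gamma,\theta)$ and $H^+(\Gamma,\theta)\subset H^+(\Gamma,\theta_1)$ separately. The forward inclusion is an immediate consequence of the hypothesis and linearity: if $f\in H^+(\Gamma,\theta_1)$, then by the containment $H^+(\Gamma,\theta_1)\subset H^+(\Gamma,\theta_i)$ we have $\sum_h f(gh)\theta_i(h)=f(g)$ for every $i\geq1$, and interchanging the sum (valid since $f\geq0$) gives $\sum_h f(gh)\theta(h)=\sum_i a_i \sum_h f(gh)\theta_i(h)=f(g)$.

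For the reverse inclusion, fix $f\in H^+(\Gamma,\theta)$ and write $P_\mu f(g):=\sum_h f(gh)\mu(h)$ for the Markov operator of a probability measure $\mu$. The key observation is that the pairwise commutativity $\theta_1*\theta_i=\theta_i*\theta_1$ extends by linearity to $\theta_1*\theta=\theta*\theta_1$, so $P_{\theta_1}$ and $P_\theta$ commute on nonnegative functions; in particular $P_{\theta_1}f$ is again a positive $\theta$-harmonic function. I would then decompose $\theta=a_1\theta_1+(1-a_1)\theta'$ with $\theta':=(1-a_1)^{-1}\sum_{i\geq2}a_i\theta_i$ (the case $a_1=1$ being trivial), so that $\theta_1$ also commutes with $\theta'$ and the $\theta$-harmonicity of $f$ reads $f=a_1P_{\theta_1}f+(1-a_1)P_{\theta'}f$. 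To make the problem tractable I would pass to the Choquet--Martin representation $f=\int_{\partial_\theta\Gamma}K_\theta(\cdot,\xi)\,d\nu(\xi)$ with $\nu$ supported on the minimal part of $\partial_\theta\Gamma$, reducing the task to showing that every minimal positive $\theta$-harmonic function $\phi$ is $\theta_1$-harmonic. By commutativity, $P_{\theta_1}\phi$ is positive and $\theta$-harmonic, and the decomposition above forces $P_{\theta_1}\phi\leq a_1^{-1}\phi$; minimality of $\phi$ then yields $P_{\theta_1}\phi=c_1\phi$ for some $c_1\in[0,a_1^{-1}]$, and analogously $P_{\theta_i}\phi=c_i\phi$ for each $i$ with $\sum_i a_i c_i=1$.

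The main obstacle is to upgrade the eigenvalue relation $P_{\theta_1}\phi=c_1\phi$ to the harmonicity identity $c_1=1$; once this is done, the hypothesis forces $c_i=1$ for every $i$, so $\phi\in H^+(\Gamma,\theta_1)$ and integrating against $\nu$ completes the argument. To push this through I plan to Doob-transform the $\theta$-random walk by $\phi$: since $\phi$ is $\theta$-harmonic the transform is a genuine Markov chain on $\Gamma$, and by commutativity its kernel decomposes, in the same proportions $a_1$ and $1-a_1$, as the Doob transforms of $\theta_1$ and $\theta'$, which have constant total masses $c_1$ and $(1-a_1c_1)/(1-a_1)$ respectively. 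A Perron--Frobenius-type argument on this transformed chain, combined with the generating property of $\theta_1$ and the hypothesis $H^+(\Gamma,\theta_1)\subset H^+(\Gamma,\theta_i)$, should force $c_1=1$. I expect this final step---in particular the careful treatment of possibly unbounded minimal $\phi$ on non-amenable groups, where positive $\theta_1$-eigenfunctions with eigenvalues different from one genuinely exist---to be the most delicate part of the argument.
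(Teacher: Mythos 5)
Your first inclusion is fine and is exactly the paper's ``clear'' direction, and for the hard inclusion $H^+(\Gamma,\theta)\subset H^+(\Gamma,\theta_1)$ your reduction to minimal functions, together with the commutation argument showing that $P^{\theta_1}\phi$ is again positive and $\theta$-harmonic, whence $P^{\theta_1}\phi=c_1\phi$ by minimality, reproduces the paper's proof up to that point. Beyond it there are two genuine gaps. First, the side claim that ``analogously $P^{\theta_i}\phi=c_i\phi$ for each $i$'' does not follow: only $\theta_1$ is assumed to commute with the other measures, so for $i\geq2$ the function $P^{\theta_i}\phi$ need not be $\theta$-harmonic and minimality tells you nothing about it; all you can extract from $\phi=\sum_i a_iP^{\theta_i}\phi$ is the two-piece decomposition into $a_1P^{\theta_1}\phi$ and $\phi-a_1P^{\theta_1}\phi$. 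Second, and decisively, the step you yourself flag as the main obstacle --- upgrading $P^{\theta_1}\phi=c_1\phi$ to $c_1=1$ --- is only announced (``should force $c_1=1$''), not proved, and it is the entire content of the theorem; as submitted, the proposal is incomplete.

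For comparison: the paper disposes of this very step in one line, asserting that minimality of $u$ gives $P^{\theta_1}u=u$, i.e.\ it tacitly takes the proportionality constant to be $1$. Your instinct that this is the delicate point is well founded; in fact no Doob-transform/Perron--Frobenius argument can close it from the stated hypotheses alone. Take $\Gamma=\ZZ$, $\theta_1=\frac12(\delta_1+\delta_{-1})$, $\theta_2=\delta_2$, $a_1=a_2=\frac12$: the measures commute, $\theta_1$ is generating, and $H^+(\ZZ,\theta_1)$ consists exactly of the positive constants, which are $\theta_2$-harmonic, so every hypothesis of the statement holds. Yet if $t_0\in(0,1)$ is a root of $\frac{t+t^{-1}}{4}+\frac{t^2}{2}=1$ (one exists, since the left-hand side equals $1$ and has positive derivative at $t=1$, and blows up as $t\to0^+$), then $u(x)=t_0^{x}$ is a positive (in fact minimal) $\theta$-harmonic function with $P^{\theta_1}u=cu$ for $c=\frac{t_0+t_0^{-1}}{2}>1$; so $c_1=1$ simply fails, and $H^+(\ZZ,\theta)\neq H^+(\ZZ,\theta_1)$. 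The eigenvalue can be pinned down only when the relation $P^{\theta_1}u=cu$ propagates to the other measures, as in the corollary $\theta_i=\theta_1^{*i}$, where $P^{\theta_i}u=c^iu$ and $\sum_i a_ic^i=1$ with $c>0$ forces $c=1$ by strict monotonicity. So the missing step in your proposal is not a technicality that a Perron--Frobenius argument could finesse; it needs hypotheses beyond those stated, and the paper's own one-line justification of $P^{\theta_1}u=u$ is precisely the point at issue.
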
 
As it was indicated above, the Martin boundary of a random walk $(\Gamma ,\mu)$ could be very different from that of $(\Gamma ,\mu'=\sum_{n\geq1}a_n\mu^{*n})$. In the case of simple random walk on a free group, Cartwright and Sawyer have provided a necessary and sufficient condition for the equality of the Martin boundaries associated with $\mu$ and $\mu'$. In particular, they show that there exists a convex combination of convolutions of the simple random walk whose Martin boundary is different from that of the simple random walk \cite{Cartwright-Sawyer91}.

Let us present the main ideas of the proof of Theorem \ref{thm:bnd}. The first step is to lift harmonic functions on $\Gamma$ to harmonic functions on a free semigroup covering $\Gamma$. We will then give a concrete geometric description of the Martin boundary of the random walk and the associated space of minimal harmonic functions on this free semigroup. 
\begin{theo}
Let $\F$ be a finitely or infinitely generated free semigroup generated by a generating set $W$. Let $\mu$ be a  probability measure supported on $W$. For any bounded randomized stopping time $\T$ and $0<t\leq 1$, the Martin boundary of $(\F,\mu_{t,\T})$ coincides with the geometric boundary of the free semigroup $\F$. 
\end{theo}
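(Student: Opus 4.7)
The plan is to identify the Martin boundary of $q := \mu_{t,\T}$ by comparing its Martin kernels to those of the much simpler walk $(\F,\mu)$ along geometrically convergent sequences. Because $\mu$ is supported on single letters, each $\mu$-step adds exactly one letter, so $G_\mu(e,y) = p_\mu(y) := \prod_{i=1}^{|y|} \mu(y_i)$ factorizes completely. A short direct computation then identifies the Martin compactification of $(\F,\mu)$ with the geometric one: the kernel extends to $K_\mu(x,\xi) = p_\mu(x)^{-1}$ when $x$ is a prefix of $\xi$ and to $0$ otherwise, and these limit functions separate points of $\partial \F$.

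The transfer to $q$ rests on the structural observation that each $q$-step is an atomic block of $\mu$-steps, so the $q$-walk visits a position $y$ precisely when the underlying $\mu$-walk visits $y$ and $|y|$ is one of the $q$-renewal times. This yields a factorization $G_q(e,y) = p_\mu(y)\,\psi(y)$, where $\psi(y) \in (0,1]$ encodes the probability that the $q$-renewal process hits level $|y|$ given that the $\mu$-path spells out $y$. Hence, whenever $x$ is a prefix of $y$,
\[
K_q(x,y) \;=\; K_\mu(x,y)\cdot\frac{\psi(x^{-1}y)}{\psi(y)}.
\]
With $\T$ bounded by some $N$, the factor $\psi$ satisfies the last-step linear recurrence $\psi(y) = \sum_{j=1}^{\min(|y|,N)} \beta(y_{|y|-j+1}\cdots y_{|y|})\,\psi(y_{1}\cdots y_{|y|-j})$, whose bounded nonnegative coefficients $\beta(w) := q(w)/p_\mu(w)$ depend only on the terminal letters of $y$.

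The heart of the argument is then to show that $\psi(x^{-1}y_n)/\psi(y_n) \to 1$ for every $y_n \to \xi$ in $\partial \F$ with $x$ a prefix of $\xi$. The assumption $t>0$ is essential here: it places positive mass on single-letter $q$-steps, making the step-length renewal process aperiodic and providing uniform positive bounds on $\psi$ along each infinite word. Packaging the recurrence as a non-autonomous iteration $V_k = A_k V_{k-1}$ on $\RR^N$, whose matrix $A_k$ depends only on the last $N$ letters of the path, a Perron--Frobenius / Birkhoff contraction in the Hilbert projective metric shows that the projective direction of $V_k$ asymptotically forgets its initial profile. Applied to the two profiles that generate $\psi(\xi_1\cdots\xi_k)$ and $\psi(\xi_{|x|+1}\cdots\xi_k)$ --- both evolving under the same $A_k$'s for large $k$ --- this forces their ratio to $1$, and the general sequence $y_n = \xi_1\cdots\xi_{k_n} w_n$ reduces to the same estimate applied to the tail $w_n$. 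One then concludes $K_q(x,y_n) \to K_\mu(x,\xi)$; since the $K_\mu(\cdot,\xi)$ separate points of $\partial \F$ (choose $x$ to be a prefix of $\xi$ but not of $\xi'$) and every $\xi$ is realized by $y_n = \xi_1\cdots\xi_n$, the Martin boundary of $(\F,q)$ coincides with $\partial \F$. The principal obstacle is the ratio convergence: establishing uniform positivity of $\psi$ and the requisite contraction demands careful renewal-theoretic control, especially in the infinitely generated case where generator probabilities may be arbitrarily small and no uniform lower bound on $\mu$ is available.
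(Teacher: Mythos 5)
Your setup is sound and close to the truth: the factorization $G_{\mu_{\T,t}}(e,y)=\mu^{|y|}(y)\psi(y)$, the kernel identity $K_{\mu_{\T,t}}(x,y)=K_\mu(x,y)\,\psi(x^{-1}y)/\psi(y)$ for $x\le y$, and the last-block recurrence with coefficients $\beta(w)=\mu_{\T,t}(w)/\mu^{|w|}(w)$ vanishing for $|w|>N$ are all correct, and your worry about small generator probabilities is actually a non-issue ($\beta\in[0,1]$ always, with $\beta(w)\ge t$ for single letters, uniformly). The genuine gap is the sentence ``this forces their ratio to $1$.'' Birkhoff/Hilbert-metric contraction of the companion-matrix products only shows that the two vectors built from $\psi(\xi_1\cdots\xi_k)$ and $\psi(\xi_{|x|+1}\cdots\xi_k)$ become asymptotically \emph{proportional}; it identifies no value for the proportionality constant. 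Indeed, two positive solutions of the same linear recurrence generically have ratio tending to a constant $\ne 1$, and in the homogeneous case the limit is $z_1^{-|x|}$ where $z_1$ is the dominant root of $z^N=\sum_j\beta_j z^{N-j}$, which equals $1$ precisely because $\sum_j\beta_j=1$. That stochasticity is the missing ingredient: since $\T$ is a bounded \emph{randomized stopping time} (so $\{\T'=j\}$ is measurable in the first $j$ increments), one has $\sum_{j=1}^N\beta(\xi_{m+1}\cdots\xi_{m+j})=1$ at every position $m$ along every infinite word, with the $j=1$ term $\ge t$. Only with this does $\psi$ become a hitting probability of a single inhomogeneous Markov chain on $\Z_+$ with jumps in $\{1,\dots,N\}$ started from different levels, and then one still needs a coupling argument or a Liouville-type statement to conclude the limits $\lambda_m=\lim_k\psi(\xi_{m+1}\cdots\xi_k)/\psi(\xi_1\cdots\xi_k)$ are identically $1$ (they form a positive harmonic function of that chain with $\lambda_0=1$). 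This is exactly the paper's route: it never compares kernels along sequences directly, but Doob-conditions the walk along a geodesic and proves a lemma that every positive harmonic function of a $\Z_+$-chain with jumps bounded by $N$ and $q(n,n+1)\ge t$ is constant, from which minimality of $K_\mu(\cdot,\g)$ for $\mu_{\T,t}$ and the boundary identification follow. Your plan becomes a proof once you add the $\sum_j\beta_j=1$ identity plus such a coupling/Liouville step; the contraction alone cannot supply it.

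A secondary omission: you only treat sequences converging to infinite words. For infinitely generated $\F$, the geometric boundary has the additional stratum $\F^*$, and continuity and separation of the kernels at those points is a separate (and delicate) matter, since for fixed $y$ the ratio $\psi(x^{-1}yw)/\psi(yw)$ depends on the escaping letter $w$ through $\beta(\cdot\,w)$; your argument says nothing about these boundary points, so as written it at most identifies the part of the Martin boundary lying over $\F^{\infty}$.
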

This can be used to show that if the randomized stopping time $\tau$ is bounded, then the space of positive harmonic functions with respect to the random walk $\mu_{\T,t}$ is left unchanged. At the end, we will use the classical result due to Dynkin and Maljutov \cite{Dynkin-Maljutov61} to deduce the invariance of the space of positive harmonic functions for the random walks on $\Gamma$.

The idea of lifting harmonic functions to the free semigroup originates from \cite{BK2013} where it was used to show that randomized stopping times with first logarithmic moment do not affect the Poisson boundary.  One should note that coincidence of the Poisson boundaries means the coincidence of almost all (with respect to the harmonic measure on the Poisson boundary) minimal harmonic functions associated to points of the Poisson boundary. Theorem~\ref{thm:bnd} concerns {\it all} (instead of almost all) minimal harmonic functions; however, the price to pay is to assume that our randomized stopping time is {\it bounded} (rather than having a finite logarithmic moment). 
Traditionally, finiteness of support is the standard condition for dealing with the Martin boundary; the only notable exception in this regard, as it was mentioned earlier, is in the recent work of Gou\"{e}zel on random walks on hyperbolic groups where the finiteness of support is replaced by having finite exponential moment \cite{Gouezel2015}.

This paper is organized as follows. In Section~1, we will set the preliminaries and define the Martin compactification. Section~2 is devoted to the construction of transformed random walks.
In Section~3, we will consider a random walk associated with a probability measure which is distributed on free generators of a free semigroup. In this setup, we describe the minimal harmonic functions and the Martin boundary associated to  probability measures $\mu_{\T'}$ and $\mu_{\T,t}$ where $\T'$ are a bounded stopping times and  a bounded randomized stopping time, respectively and $0<t\leq 1$. Finally, the last section will be about the main results for discrete groups.

\section{Random walks on groups}\label{RW}
Throughout this paper, $\Gamma$ denotes a discrete group, endowed with a probability measure $\mu$. Assume the semigroup generated by $\mu$ is $\Gamma$, in other words, $\mu$ is generating. 
Define the random walk $(\Gamma ,\mu)$ as the Markov chain with the state space $\Gamma$ and the
transition probabilities given by
$$
p(g_1,g_2)=\mu(g^{-1}_1g_2), \quad g_1, g_2 \in \Gamma .
$$
The \emph{space of increments} of the random walk $(\Gamma ,\mu)$ is denoted by $(\Gamma ^{\mathbb N},\bigotimes\limits_{1}^{\infty}\mu)$, where $\bigotimes$ denotes the product measure. One can also view the space of increments as the set of sequences $(h_n)_{n\geq 1}$ of independent and identically $\mu$-distributed random variables.
The space of sample paths of the random walk $(\Gamma ,\mu)$ is the probability
space $(\Gamma ^{\Bbb Z+},\p)$,  where $\Gamma^{\Bbb Z+}=\{e\}\times \Gamma^{\Bbb N}$. Note that we only consider  random walks issued from the identity element $e$, and the probability measure $\p$  is the push-forward of the measure
$\bigotimes_{1}^{\infty}\mu$ under the map
$$
(h_1,h_2,\cdots)\mapsto \x=(e,x_1,x_2,\cdots),
$$
where $x_n=h_1\cdots h_n$ denotes the position of the random walk $(\Gamma ,\mu)$ at time $n$.

\subsection{The Martin boundary} In this section, we will briefly discuss the notion of Martin 
boundary of a probability measure on a discrete group or semigroup. 
A more detailed discussion can be found in \cite{Dynkin69,Sawyer1997, Woess09}.\\
Let $X$ be a countable infinite set. A compactification of $X$ is a compact Hausdorff space $ \hat{\rule{0ex}{1.7ex}\mkern-3mu X}$ which contains $X$ as a dense subset, such that the induced topology from $ \hat{\rule{0ex}{1.7ex}\mkern-3mu X}$ to $X$ is discrete.  Two compactifications of $X$ are deemed to be equal if the identity map $X \to X$ extends to a homeomorphism between them. 

Assume now that $\Gamma$ is a discrete group and $\mu$ is a generating probability measure on $\Gamma$. 
For $x,y \in \Gamma$, let $F_{\mu}(x,y)$ denote the probability that the trajectory of the random walk, when started at $x$, ever hits $y$. The Markov chain $(\Gamma ,\mu)$ is called \emph{transient} when $F_{\mu}(x,y)<1$ for all $x,y \in \Gamma$. One can also define transience using Green functions. Recall that the Green function of $\mu$ is defined by 
$$
G_{\mu}(x,y)=\sum_{n\geq 0}\mu^{*n}(x^{-1}y).
$$ 
One can show that transience of $(\Gamma, \mu)$ is equivalent to the condition $G_{\mu}(x,y)< \infty$ for all $x,y \in \Gamma $. From now on, all Markov chains considered in this section are assumed to be 
transient. This allows one to define the \emph{Martin kernel} by
 $$
 K_{\mu}(x,y)=\frac{G_{\mu}(x,y)}{G_{\mu}(e,y)}.
 $$

In order to define the Martin compactification, let us equip $\Gamma$ with the following metric: 
\begin{df}
Fix a bijection $N: \Gamma \to \NN $. The compactification $\hat{\Gamma}$ with respect to the metric
$$
d(y,z)=|2^{-N(y)}-2^{-N(z)}|+\sum_x|K_\mu(x,y)-K_\mu(x,z)|F_\mu(e,x)2^{-N(x)}
$$
is called the \emph{Martin compactification} of the random walk $(\Gamma ,\mu)$. Furthermore, 
$\partial_\mu\Gamma:= \hat{\Gamma} \setminus \Gamma$ is called the \emph{Martin boundary} of the random walk. 
\end{df}

One can show that  $\hat\Gamma$ is the Martin compactification of $(\Gamma,\mu)$ if and only if   for a fixed $x \in \Gamma $, the Martin kernel $K_\mu(x,.)$ extends uniquely to a continuous function defined on  $\hat\Gamma$  in such a way that that for boundary points $\eta$ and $\xi$, we have $K_\mu( \cdot ,\eta)=K_\mu( \cdot ,\xi)$ if and only if $\eta=\xi$.   
 
\subsection{Harmonic functions}
A function $f: \Gamma \to \RR$ is called $\mu$--\emph{harmonic} whenever for every $g$  in $\Gamma$,  the series defining the expression 
$$P^\mu f(g)=\sum_{h\in \Gamma }f(gh)\mu(h)$$
is absolutely convergent and
$P^\mu f=f.$
Denote the set of all positive $\mu$-harmonic functions by $H^+(\Gamma ,\mu)$. 

Harmonic functions are intimately connected to the properties of random walks on groups. For instance, the classes of bounded and positive harmonic functions with respect to a probability measure are known to be related to the Poisson and Martin boundaries associated to that measure. In studying positive harmonic functions, a key role is played by \emph{minimal harmonic functions}. A positive harmonic function $f$ is called minimal, if $f(e)=1$ and for any decomposition $f=f_1+f_2$ of $f$ into a sum of positive harmonic functions $f_1$ and $f_2$, both $f_1$ and $f_2$ are proportional to $f$.
\medskip

The subset of the Martin boundary consisting of points $\xi$ where $K_\mu( \cdot ,\xi)$ is a minimal $\mu$--harmonic is called the minimal Martin boundary, and is denoted by 
$\pa_m\Gamma$. 

Every positive harmonic function has a representation as an integral of minimal harmonic functions with respect to some measure on the Martin boundary. In other words, given a positive harmonic function $f$, there exists a unique measure $\nu_f$ supported on the minimal Martin boundary such that for all $x \in \Gamma$, we have
$$
f(x)=\int_{\pa_m\Gamma}K_\mu(x,\xi)d\nu_f(\xi).
$$
One can see that if $f$ itself is minimal harmonic, then $\nu_f$ is a point measure, and hence $f=K_\mu(.,\eta)$ for some $ \eta$ in the boundary.


\section{Transformed random walks}\label{section: stopping time}
Consider a generating probability measure $\mu$ on a discrete group $\Gamma$.
Let $\tau:(\Gamma ^{\Bbb Z+},\p)\mapsto \Bbb N\cup\{\infty\}$ be a stopping time defined on the space of sample paths, that is, $\tau$ is a measurable map and for each integer $s \ge 1$ the set defined by $\tau^{-1}(s)$ is in the $\sigma$-algebra generated by the position of the random walk $(\Gamma,\mu)$ from time 0 to $s$. All stopping times considered in this paper are assumed to be finite almost everywhere.

Since random walks on groups are time and space homogeneous, any stopping time $\tau$ can 
be iterated to construct a new random walk\cite{BK2013}. More formally, set 
$\tau_1=\tau$, and for $n \ge 1$ define $(n+1)$-th iteration of $\tau$ inductively via
\begin{equation}\label{iteration}
\tau_{n+1}=\tau_n+\tau(U^{\tau}),
\end{equation}
where
\begin{equation}\label{induced}
U(\x)=(x_1^{-1}x_{n+1})_{n\geq0}
\end{equation}
is the transformation of the path space induced by the time shift in the space of increments.

\medskip
The subsequence $(x_{\tau_n})$ can now be regarded as a sample path of the random walk $(\Gamma ,\mu_{\tau})$, where
$\mu_{\tau}$ is  the distribution of  $x_{\tau}$, i.e.,
$$
\mu_{\tau}(g)=\p\{\x\ :\ x_{\tau}=g\}.
$$
Obviously, each $\tau_n$ is also a  stopping time, and, moreover,
the distribution of $x_{\tau_n}$ is the $n$-fold convolution of $\mu_{\tau}$, i.e, $\mu_{\tau_{n}}=(\mu_{\tau})^{*n}$.
Let us give some examples of this construction:
\begin{ex}\label{convolution}
Let $k$ be a positive integer, and let  $\tau$ be the constant function $k$. Then $\mu_{\tau}$ is the $k$-fold convolution of measure $\mu$,
i.e., $\mu^{*k}$.
\end{ex}

\begin{ex}
Let $\tau_1$ and $\tau_2$ be two  stopping times for the random walk $(\Gamma ,\mu)$. Then,
$\tau=\tau_1+\tau_2(U^{\tau_1})$ is a stopping time, 
and $\mu_{\tau}=\mu_{\tau_1}*\mu_{\tau_2}$.
\end{ex}

\begin{ex}\label{wil}
Let $B \subseteq \Gamma$ with $\mu(B)>0$.  For a sample path $\x=(x_n)$, let 
$$
\tau(\boldsymbol{x})=\min\{i\geq1\ :\ h_i\in B\},
$$
denote the least value of $i$ for which the increment $h_i$ lands in $B$. It can be easily seen that $\tau$ is a  stopping time. If $\mu(B)=1$, then trivially $\mu_{\tau}=\mu$; otherwise
$$
\mu_{\tau}= \sum_{i=0}^{\infty}\alpha^{*i}*\beta,
$$
where $\beta$ is the restriction of $\mu$ into $B$, i.e.,  $\beta(A)=\mu(A\cap B)$ for a subset $A$ of $\Gamma$, and $\alpha=\mu-\beta$.
\end{ex}
\subsection{Randomized stopping time}
In this subsection we will recall the definition of randomized stopping times. Readers are referred for more details to \cite{BK2013,Tsirelson}. 

Let $(\Omega,\lambda)$ be a probability space. A measurable function $\tau: \Gamma^{\Z+}\times \Omega\to\Bbb N\cup\{\infty\}$ is called a \emph{randomized stopping time} whenever for every $\omega$ in $\Omega$, the measurable function $\tau(.,\omega)$ is a stopping time. We will always assume that $\T$ is almost everywhere finite and set $\mu_\T(g)=\p\otimes\lambda\{(\x,\omega): \x_{\tau(\x,\omega)}=g\}$.

\begin{ex}
By choosing $\Omega$ to be a singleton, each stopping time $\tau$ can be viewed as a (deterministic) randomized stopping time. 
\end{ex}
\begin{ex}\label{ex : convex}
Let $\rho_n:\Gamma^{\Z+}\times\Omega_n\to\Bbb N\cup\{\infty\}$ be a randomized stopping time for $n\geq 1$. Let  $(a_i)_{i\geq1}$ be a nonnegative sequence such that $\sum_{i\geq 1} a_i=1$. For $(\x,\omega_1,\omega_2,\cdots,k)\in \Gamma ^{\Z+}\times\Omega_1\times\Omega_2\times\cdots\times\Bbb N$, define 
$\T'(\x,\omega_1,\omega_2,\cdots,k)=\rho_k(\x,\omega_k)$. Then $\T'$ is a randomized stopping time and $\mu_{\T'}=\sum_na_n\mu_{\rho_n}$.
As a special case, when $\rho_n=n$, we obtain $\mu'=\sum_na_n\mu^{*n}$.  
\end{ex}
\begin{ex}
Let $\T$ be a randomized stopping time and $k$ be a natural number.  Then $\rho=\min\{\T,k\}$ is a bounded randomized stopping time. 
\end{ex}
\section{The Martin boundary of free semigroups} 
In this section, we will describe the Martin boundary of a free semigroup under some special random walks. 
\subsection{The geometric boundary of a free semigroup}
Let $\F$ denote the free semigroup on the free generating (finite or infinite countable) set $W$ whose cardinality will be referred to as the rank of $\F$. Note that each element of $\F$ can be uniquely expressed as a finite word $x=w_1 \dots w_n$ with $w_i \in W$, 
$1 \le i \le n$, in this case, $|x|:=n$. The neutral (identity) element of $\F$, which can be viewed as the empty word on $W$, will be denoted by $e$. One can naturally construct a directed graph with the vertex set $\F$ as follows: for $x$ and $w$ in $\F$, there is a directed edge from $x$ to $xw$ for each $w$ in $W$.
It is easy to see that the graph obtained in this way is a tree in which every vertex has the in-degree one (except for $e$, for which the in-degree
is zero) and the out-degree the cardinality of $W$. For $x$ and $y$ in $\F$, we write $x\leq y$  when there exists $h$  in $\F$ such that $y=xh$. When this holds, the element 
$h$ will be denoted by $x^{-1}y$. Geometrically, $x \le y$ is equivalent to the condition that $x$ lies on the geodesic between $e$ and $y$ on the Cayley graph of $\F$.

\medskip
Denote the set of all infinite words associated to $\F$ by $\F^{\infty}$.  Let $\F^*=\{y^*\ :\ y\in\F\}$ be a copy of $\F$ which is set-theoretically disjoint from $\F$.  Set

$$\hat\F=\Bigg\{\begin{array}{lr}
\F\sqcup\F^{\infty}, & \F \mbox{ is finite rank} \\
 & \\
\F\sqcup\F^*\sqcup\F^{\infty}, &   \F \mbox{ is infinite rank}.
\end{array}
$$

For each finite word $x$ in $\F$, let $\F_x:=x\F$ be the set of all finite words starting with $x$. Similar to $\hat\F$, define $\hat{\F}_x= x\F \sqcup x^{\ast} \F^*\sqcup x\F^{\infty} $.  For a finite word $x$ in $\F$ and a finite subset  $E \subseteq  W$, let 
$E(x)=\bigcap_{w\in E}\hat{\F}\backslash\hat{\F}_{xw}$. The \emph{geometric boundary} of $\F$ is  $\hat\F$  which is equipped with the topology generated by
$$
\{\{x\},\hat{\F}_x, E(x)\ :\ x\in\F\}.
$$
The geometric boundary of $\F$ is indeed a compactification of $\F$, see \cite{Woess09}. Let us briefly remark that if $\F$ has finite rank (finitely generated), then from any infinite sequence in $\F$ one can extract an infinite subsequence whose terms are initial segments of an infinite word. This is no longer the case when $\F$ has infinite rank (infinitely generated). For example, for any sequence $w_1,w_2,\cdots$ of distinct generators of $\F$, no subsequence of the sequence $(w_n)_{n\geq1}$ has a limit in $\F\sqcup\F^{\infty}$. In fact,  $\lim_nw_n= e^*$ in the geometric boundary $\hat\F$.  More generally, it also is easy to see a sequence
$(x_n)_{n \ge 1}$ in $\F$ converges to a point in the geometric boundary of $\F$ if and only if it falls under one of the following three classes:
\begin{enumerate}
\item Eventually constant sequence: $x_n\to x\in\F$ if except for finitely many values of $n$, we have $x_n=x$.
\item Sequences with increasing initial common segments: here $x_n\to \g\in\F^{\infty}$,  where except for finitely many $n$, we have $x_n< \g$ and $\lim_n|x_n|=\infty$.
\item Sequences with a finite common initial segment: $x_n \to x^*\in\F^*$ if except finitely many values of $n$, the finite word $x_n$ is in $\F_x$  and for every finite subset $E$ of the generating set $W$ we have $\{ x_n: n \ge 1 \} \not\subseteq \bigcup_{w\in E}\F_{xw}$. 
\end{enumerate} 

Note that (3) only happens  when $\F$ is  infinitely generated. 

\subsection{Semi-deterministic Random walk on $\F$}
The goal of this section is to show that the Martin boundary of some special random walks on $\F$ can be identified with the geometric boundary of $\F$. Our approach is similar to
the one used for identifying the Martin boundary of the nearest neighbor random walks on trees discovered by Cartwright, Soardi, and Woess \cite{Carwright-Soardi-Woess-1993}.

We will start by endowing $\F$ with a probability measure $\mu$ distributed on the generating set $W$, and consider the random walk $(\F,\mu)$ which is a Markov chain issued from identity element (empty word) $e$ of $\F$ with transition probabilities given by
$$
p(x,xw)=\mu(w)
$$
for all $x$ in $\F$ and $w$ in $W$.
For any natural number $n$, define $\mu^{n}(x):=\mu(w_1)\cdots\mu(w_n)$ where $x=w_1\cdots w_n$ for some $w_i$ in $W$ and $i=1,\cdots,n$. As there is no cancellation in the random walk $(\F,\mu)$ and $\mu$ is distributed on the generating set $W$, the $n$-fold convolution of $\mu$  
is concentrated on all the words whose distance from identity is $n$. Therefore, we have $\mu^{*n}(x)=\mu^n(x)$. 
Consequently, we have the following explicit description of the Green functions: 
$$
G_{\mu}(x,y)=
\left\{
	\begin{array}{ll}
		\mu^{|x^{-1}y|}(x^{-1}y)  & \mbox{if } x \leq y \\
		0 & \mbox{otherwise }
	\end{array}
\right.
$$
Consequently, we have
$$
K_{\mu}(x,y)=\frac{G_{\mu}(x,y)}{G_{\mu}(e,y)}=
\left\{
	\begin{array}{ll}
\frac{1}{\mu^{|x|}(x)} & \mbox{if } x \leq y \\
		0 & \mbox{otherwise }
	\end{array}
\right.
$$

Assume that $\tau$ is a stopping time for the random walk $(\F,\mu)$. Let
$\F_\T$ be the  semigroup generated by the support of $\mu_\T$, which is indeed a free semigroup.  Since there is no backtracking in the random walk $(\F,\mu)$, the element $x$ from $\F$ is in the support of $\mu_\T$ if and only if $\mu_\T(x)=\mu^{|x|}(x)$. As with the Green function of the random walk $(\F,\mu)$, for $x$ and $y$ in $\F_\T$, we have 
$$
G_{\mu_\T}(x,y)=
\left\{
	\begin{array}{ll}
		\mu^{|x^{-1}y|}(x^{-1}y)  & \mbox{if } x \leq y \\
		0 & \mbox{otherwise }
	\end{array}
\right.
\hspace{.6cm}
K_{\mu_\T}(x,y)=
\left\{
	\begin{array}{ll}
\frac{1}{\mu^{|x|}(x)} & \mbox{if } x \leq y \\
		0 & \mbox{otherwise }
	\end{array}
\right.
$$
It is easy to extend these Martin kernels to all the points in $\hat{\F}$ by setting 
$$
K_{\mu_{\T}}(x,\g)= \left\{
	\begin{array}{ll}
\frac{1}{\mu^{|x|}(x)} & \mbox{if } x \leq \g \\
		0 & \mbox{otherwise }
	\end{array}
\right.
$$
where $\g\in\F_\T^{\infty}$ and  $K_{\mu_{\T}}(x,y^*)=K_{\mu_\T}(x,y)$ whenever $y^*\in\F_\T^*$  (when  $\F_\T$ is infinitely generated).  
Let $(x_n)_{n\geq1}$ be a sequence in $\F$ which converges to boundary point $\xi$. Since $K_{\mu_\T}(x,.)\in\{0,\frac{1}{\mu^{|x|}(x)}\}$,  the characterization of convergence in the boundary points of $\hat\F$ implies that the sequence $\Big(K_{\mu_\T}(x,x_n)\Big)_{n\geq 1}$ is eventually constant and equal to $K_{\mu_\T}(x,\xi)$. Consequently, the extensions of Martin kernels to the boundary points is continuous. We also have $K_{\mu_\T}(.,\xi)\not=K_{\mu_\T}(.,\eta)$  for $\xi\not=\eta$. We thus have proven the following theorem: 
\begin{theorem}
The Martin boundary of the random walk $(\F_\T,\mu_\T)$ is homeomorphic to the geometric boundary of $\F_\T$. 
\end{theorem}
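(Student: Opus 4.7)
The plan is to invoke the characterization of the Martin compactification from Section~1: it is the unique compactification to which the Martin kernels $K_{\mu_\T}(x,\cdot)$ extend continuously and on which they separate boundary points. The extensions to $\hat{\F}_\T$ are already written down in the paragraph above the statement, so what remains is to verify (a) continuity of those extensions and (b) separation of points in $\hat{\F}_\T\setminus\F_\T$. The structural feature driving both steps is that, for a semi-deterministic walk on a free semigroup, $K_{\mu_\T}(x,\cdot)$ is two-valued: it equals $1/\mu^{|x|}(x)$ on elements extending $x$ and vanishes elsewhere.

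Continuity at a boundary point $\xi$ thus reduces to showing that $K_{\mu_\T}(x,x_n)$ is eventually constant, with the prescribed value, for every sequence $x_n\to\xi$. I would run through the three convergence classes in the paper's description of $\hat{\F}_\T$. The eventually-constant case is immediate. For $x_n\to\g\in\F_\T^\infty$ with $x_n<\g$ and $|x_n|\to\infty$, eventually $|x_n|\ge|x|$, and because any two prefixes of the same infinite word are nested, $x\le x_n$ iff $x\le\g$. For $x_n\to x_0^*\in\F_\T^*$ we have $x_n\in\F_{x_0}$ eventually; if $x\le x_0$ then $x\le x_n$ eventually, and if $x$ is incomparable with $x_0$ then $x\not\le x_n$ as soon as $x_0\le x_n$. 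The only delicate sub-case is $x_0<x$: writing $x=x_0wu$ and using the neighborhood $\hat{\F}_{x_0}\cap E(x_0)$ of $x_0^*$ with the singleton $E=\{w\}$, convergence forces $x_n\notin\F_{x_0w}$ eventually, whence $x\not\le x_n$ eventually.

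Separation is a short case analysis. For distinct $\g,\g'\in\F_\T^\infty$, take the prefix of $\g$ ending at the first letter where $\g'$ diverges; it is a prefix of $\g$ but not of $\g'$, so the associated kernel separates them. For distinct $y^*,z^*\in\F_\T^*$, either $y$ and $z$ are incomparable and $K_{\mu_\T}(y,\cdot)$ separates them, or without loss of generality $y<z$ and $K_{\mu_\T}(z,\cdot)$ does. To separate $y^*\in\F_\T^*$ from $\g\in\F_\T^\infty$, choose a prefix $x$ of $\g$ with $|x|>|y|$; then $x\le\g$ but $x\not\le y$. Once (a) and (b) are in hand, the characterization of the Martin compactification delivers the homeomorphism.

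The one step that requires more than routine bookkeeping is the sub-case $x_0<x$ in the continuity argument, since there the topological definition of convergence to $x_0^*$ — which is phrased via arbitrary finite subsets of the generating set — must be unpacked into a combinatorial statement about which $x_n$ extend $x$. Everything else is an immediate consequence of the two-valued nature of the Martin kernels in this semi-deterministic setting.
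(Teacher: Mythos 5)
Your proposal is correct and follows essentially the same route as the paper: extend the two-valued Martin kernels to $\hat{\F}_\T$, check via the three convergence classes of the geometric boundary that $K_{\mu_\T}(x,x_n)$ is eventually constant with the right limit, verify separation of boundary points, and invoke the characterization of the Martin compactification. Your case analysis (including the $x_0<x$ sub-case and the separation of $\F_\T^*$ from $\F_\T^\infty$) simply spells out details the paper dismisses as easy.
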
 
We can now characterize the minimal harmonic functions with respect to the random walk $(\F_\T,\mu_\T)$.
\begin{theorem}\label{minimal}
For any stopping time $\tau$, the set $\{K_{\mu_\T}(.,\g) :\ \g\in\F_\T^{\infty}\ \}$ coincides with the minimal $\mu_\T$--harmonic functions.
\end{theorem}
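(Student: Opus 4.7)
The plan is to establish both inclusions. The forward one---that every $K_{\mu_\T}(\cdot,\g)$ with $\g\in\F_\T^\infty$ is minimal $\mu_\T$-harmonic---is a direct computation. The reverse inclusion---that every minimal harmonic function arises in this way---follows from the Martin integral representation combined with the previous theorem, once I rule out the extra boundary points in $\F_\T^*$ that appear in the infinite-rank case.

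For the forward inclusion, fix $\g\in\F_\T^\infty$ and write $\g=v_1v_2\cdots$ with each $v_i\in\supp\mu_\T$ (a generator of $\F_\T$). Harmonicity of $K_{\mu_\T}(\cdot,\g)$ is immediate off the ray $\{v_1\cdots v_k:k\ge 0\}$, where both $K_{\mu_\T}(\cdot,\g)$ and its $\mu_\T$-average vanish; on the ray, at $x=v_1\cdots v_k$ only $u=v_{k+1}$ satisfies $xu\le\g$, so using $\mu_\T(v_{k+1})=\mu^{|v_{k+1}|}(v_{k+1})$ together with the explicit formula for $K_{\mu_\T}$,
$$
\sum_{u\in\supp\mu_\T}K_{\mu_\T}(xu,\g)\mu_\T(u)=\frac{\mu_\T(v_{k+1})}{\mu^{|xv_{k+1}|}(xv_{k+1})}=\frac{1}{\mu^{|x|}(x)}=K_{\mu_\T}(x,\g).
$$
For minimality, if $K_{\mu_\T}(\cdot,\g)=f_1+f_2$ with $f_1,f_2\ge 0$ both $\mu_\T$-harmonic, positivity forces each $f_i$ to vanish off the ray, and harmonicity along the ray collapses to the recursion $f_i(v_1\cdots v_{k+1})=f_i(v_1\cdots v_k)/\mu_\T(v_{k+1})$. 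Hence $f_i$ is determined by $f_i(e)$ alone, yielding $f_i=f_i(e)K_{\mu_\T}(\cdot,\g)$.

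For the reverse inclusion, the Martin integral representation together with the previous theorem imply that any minimal $\mu_\T$-harmonic function has the form $K_{\mu_\T}(\cdot,\xi)$ for some $\xi\in\F_\T^\infty$ or (only in the infinite-rank case) $\xi\in\F_\T^*$. To exclude $\F_\T^*$, I would observe that for $y^*\in\F_\T^*$ the kernel $K_{\mu_\T}(\cdot,y^*)$ is supported on the finite set $\{x\in\F_\T:x\le y\}$, and at the leaf $x=y$ no child $yu$ satisfies $yu\le y$; hence
$$
\sum_{u\in\supp\mu_\T}K_{\mu_\T}(yu,y^*)\mu_\T(u)=0<\frac{1}{\mu^{|y|}(y)}=K_{\mu_\T}(y,y^*),
$$
so $K_{\mu_\T}(\cdot,y^*)$ is strictly superharmonic at $y$ and in particular not harmonic, let alone minimal harmonic.

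The main subtlety I anticipate is exactly this interface between the full Martin boundary and the minimal harmonic boundary in the infinite-rank case. Sequences $(y_n)$ with $y_n\to y^*$ do converge to $y^*$ in the Martin compactification, yet the superharmonic defect of each $K_{\mu_\T}(\cdot,y_n)$---a point mass of size $1/G_{\mu_\T}(e,y_n)$ at $y_n$---refuses to disappear in the limit: instead of escaping to infinity it concentrates at $y$, so the limiting kernel is only superharmonic. Once this phenomenon is identified and handled by the non-harmonicity check above, the two inclusions fit together to give the theorem.
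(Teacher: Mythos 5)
Your proposal is correct and follows essentially the same route as the paper: harmonicity and minimality of $K_{\mu_\T}(\cdot,\g)$ are verified directly via the collapse of the harmonicity sum to the single continuation along $\g$, and the reverse inclusion uses the Martin (Poisson) representation of minimal harmonic functions together with the failure of harmonicity of $K_{\mu_\T}(\cdot,y)$ at the finite word $y$ to exclude boundary points of $\F_\T^*$. Your treatment of the infinite-rank points $y^*$ is just a more explicit spelling-out of the paper's one-line exclusion, not a different argument.
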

\begin{proof}
It is  easy to see that $K_{\mu_{\T}}(.,\g)$ is a $\mu_{\tau}$--harmonic function. Assume there exist two positive $\mu_{\tau}$--harmonic functions $f_1$ and $f_2$ such that $f_1(e)=f_2(e)=1$ and $0<a<1$ such that
$$
K_{\mu_{\T}}(.,\g)=af_1+(1-a)f_2\;.
$$
If $x$ does not lie on the geodesic $\g$, we have $K_{\mu_\T}(x,\g)=f_1(x)=f_2(x)=0$.  Since $f_i$ is $\mu_\T$--harmonic for $i=1,2$, we have
$$
f_i(x)=\sum_hf_i(xh)\mu_{\tau}(h)\;,
$$
and consequently, $f_i(x)\geq f_i(xh)\mu_{\tau}(h)$. Thus for any $h$ in the support of $\mu_{\T}$ such that $xh$ lies on the geodesic $\g$, we can write
$$
K_{\mu_{\T}}(x,\g)\geq K_{\mu_{\T}}(xh,\g)\mu_{\tau}(h)=K_{\mu_{\T}}(x,\g)\;,
$$
which implies that $f_i(x)=f_i(xh)\mu_{\tau}(h)$. Since $f_i(e)=1$, by induction, we have $f_1=f_2=K_{\mu_\tau}(.,\g)$, hence $K_{\mu_\tau}(.,\g)$ is a minimal harmonic function.  On the other hand, if $u$ is a minimal $\mu_\T$--harmonic, by the Poisson integral formula, there is $\xi$ in the Martin boundary such that $u=K_{\mu_\T}(.,\xi)$. Since $K_{\mu_\T}(.,y)$ is not a $\mu_\T$--harmonic function at $y$, therefore $\xi=\g$ for an infinite word. So  $u=K_{\mu_\T}(.,\g)$.
\end{proof}

If $\T$ is a bounded stopping time, then $\F_\T^{\infty}=\F^{\infty}$. 
When $\T$ is not  bounded, there can exist a subset of sample paths with measure zero over which the stopping time takes the value infinity. Although this is a null set, it can change the Martin boundary. In fact, the infinite sequences with respect to these sample paths do not belong to $\F_\T^\infty$. Therefore, the minimal boundary with respect to $(\F_\T,\mu_\T)$ may be different from that of $(\F,\mu)$. 
For example, consider the simple random walk on $\F_3$, the free semigroup generated by three generators $a,b$, and $c$. Let $\tau$ be the first time that an increment equals $a$. Then any infinite word consisting only of letters $b$ and $c$ does not belong to $\F_\T^\infty$.

Note that for every infinite word $\g$ in $\F_\T$, the minimal $\mu_\T$--harmonic functions $K_{\mu_\T}(.,\g)$ can be uniquely extended to the minimal $\mu$--harmonic function $K_{\mu}(.,\g)$. Hence, it follows
\begin{cor}
Let $\T$ be a  bounded stopping time. Then, a positive $\mu$--harmonic function restricts to a positive
$\mu_\T$--harmonic function. Moreover, each $\mu_\T$--harmonic function extends uniquely to a $\mu$--harmonic function.
\end{cor}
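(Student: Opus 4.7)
The plan is to use the Poisson integral representation together with the explicit description of the Martin kernels provided in Section 3. The two key structural facts to exploit are: first, since $\T$ is bounded we have $\F_\T^\infty=\F^\infty$ (as noted right before the corollary); and second, the formulas for $K_\mu$ and $K_{\mu_\T}$ show that on $\F_\T$ the two kernels agree, namely $K_\mu(x,\g)=K_{\mu_\T}(x,\g)=1/\mu^{|x|}(x)$ whenever $x\leq\g$ and $x\in\F_\T$, $\g\in\F^\infty$, and both vanish otherwise.

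For the restriction direction, suppose $f$ is a positive $\mu$--harmonic function on $\F$. By the Poisson integral formula, there is a unique positive measure $\nu_f$ on $\F^\infty$ such that
$$
f(x)=\int_{\F^\infty}K_\mu(x,\g)\,d\nu_f(\g)\qquad(x\in\F).
$$
For $x\in\F_\T$, kernel agreement replaces $K_\mu(x,\g)$ by $K_{\mu_\T}(x,\g)$ under the integral, so the restriction $f|_{\F_\T}$ is a positive integral of minimal $\mu_\T$--harmonic functions (by Theorem~\ref{minimal}) and therefore positive $\mu_\T$--harmonic.

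For the extension, start with a positive $\mu_\T$--harmonic function $g$ on $\F_\T$. Its Poisson representation gives a measure $\nu_g$ on $\F_\T^\infty=\F^\infty$ with $g(x)=\int K_{\mu_\T}(x,\g)\,d\nu_g(\g)$ for $x\in\F_\T$. Define
$$
\tilde g(x):=\int_{\F^\infty}K_\mu(x,\g)\,d\nu_g(\g)\qquad(x\in\F).
$$
Then $\tilde g$ is positive $\mu$--harmonic on $\F$ by Theorem~\ref{minimal} (each integrand is minimal $\mu$--harmonic), and kernel agreement on $\F_\T$ yields $\tilde g|_{\F_\T}=g$.

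The main obstacle is uniqueness of the extension. Suppose $\tilde g_1,\tilde g_2$ are two positive $\mu$--harmonic extensions of $g$, with boundary representing measures $\nu_1,\nu_2$ on $\F^\infty$. Using $K_\mu(x,\g)=\mathbf{1}_{x\leq\g}/\mu^{|x|}(x)$, the equality $\tilde g_1(x)=\tilde g_2(x)$ for $x\in\F_\T$ translates into
$$
\nu_1(\{\g\in\F^\infty:x\leq\g\})=\nu_2(\{\g\in\F^\infty:x\leq\g\})\qquad\text{for every }x\in\F_\T.
$$
Because $\T$ is bounded, iteration of $\T$ produces prefixes of every $\g\in\F^\infty$ of arbitrarily large length lying in $\F_\T$; equivalently, $\F_\T^\infty=\F^\infty$ means that the cylinder sets $\{\g:x\leq\g\}$ with $x\in\F_\T$ form a neighborhood basis of every point of $\F^\infty$, and hence generate the Borel $\sigma$-algebra on $\F^\infty$. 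A standard $\pi$--$\lambda$ argument then forces $\nu_1=\nu_2$, so $\tilde g_1=\tilde g_2$ on all of $\F$, establishing uniqueness.
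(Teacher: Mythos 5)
Your proof is correct and follows essentially the same route as the paper: it rests on Theorem~\ref{minimal}, the identity $\F_\T^{\infty}=\F^{\infty}$ for a bounded stopping time, and the Martin--Poisson integral representation, with the kernels $K_{\mu}$ and $K_{\mu_\T}$ agreeing on $\F_\T$. The paper states the corollary as an immediate consequence of the unique extension of each minimal kernel $K_{\mu_\T}(\cdot,\g)$ to $K_{\mu}(\cdot,\g)$; your $\pi$--$\lambda$ uniqueness argument simply makes this explicit, and it is valid because $\F_\T$ is countable and every infinite word has arbitrarily long prefixes lying in $\F_\T$.
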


\subsection{Randomized stopping measure $\mu_{\T,t}$}
Again let $\mu$ be a probability measure supported on the generating set $W$. Fix a real number $0<t<1$, and consider the convex combination $\mu_{\T,t}=t\mu+(1-t)\mu_\T$, where $\T$ is a  bounded randomized stopping time. In this section, we will characterize all minimal $\mu_{\T,t}$--harmonic functions. 

First note that by Example~\ref{ex : convex}, there exists a randomized stopping time $\T'$ such that 
$\mu_{\T'}=\mu_{\T,t}$, more precisely,  let $\T'$ be the randomized stopping time which equals $1$ with probability $t$ 
and equals $\tau$ with probability $1-t$.


 Since $\mu$ is distributed on $W$, for any $x \in \F$, and any $n \ge 1$ there exists $c_n(x)$ in the interval $[0,1]$ such that $\mu^{*n}_{\T,t}(x)=c_n(x)\mu^{|x|}(x)$. Consequently,
\begin{equation}\label{eq:green T,t}
G_{\mu_{\T,t}}(e,x)=\sum_{n=0}^{|x|}\mu^{*n}_{\tau, t} (x)=c(x)\mu^{|x|}(x)
\end{equation}
where $c(x)=\sum_{n=0}^{|x|}c_n(x)$.

We are going to show that any positive minimal $\mu_{\T,t}$--harmonic function can be identified with an infinite word.
\begin{theorem}\label{thm : geodesic minimal} 
If the positive function $u$ is minimal $\mu_{\T,t}$--harmonic, then there exists an infinite word $\g$ such that $u(x)>0$ if and only if $x<\g$.
\end{theorem}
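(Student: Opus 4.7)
The plan is to exploit the non-backtracking structure of the free semigroup together with $\mu_{\T,t} \geq t\mu$ (where $\mu$ is supported on the generators $W$) to force every minimal harmonic function to be concentrated along a single infinite ray. Without loss of generality I may assume $\mu$ has full support on $W$ (otherwise replace $W$ by $\supp\mu$, which does not affect the statement).

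I would first observe that the positivity set $S := \{x \in \F : u(x) > 0\}$ is closed under prefixes. Iterating the harmonicity identity gives $u(y) = \sum_g u(yg)\,\mu_{\T,t}^{*n}(g)$ for every $n \geq 0$, and for $y \leq x$ the choice $n = |y^{-1}x|$ together with $\mu_{\T,t}^{*n}(y^{-1}x) \geq t^n \mu^n(y^{-1}x) > 0$ yields $u(y) \geq u(x)\,\mu_{\T,t}^{*n}(y^{-1}x) > 0$.

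The core of the argument is a decomposition establishing uniqueness of the successor at the root. Suppose toward a contradiction that $u(e) = 1$ and $u(w_1), u(w_2) > 0$ for two distinct generators $w_1, w_2 \in W$. Define
$$
v_1(y) := \begin{cases} u(y), & y \geq w_1, \\ 0, & y \neq e \text{ and } y \not\geq w_1, \end{cases}
$$
extend by $v_1(e) := \sum_h v_1(h)\,\mu_{\T,t}(h)$, and set $v_2 := u - v_1$. The absence of cancellation in $\F$ --- every descendant of a word starting with $w_1$ still starts with $w_1$, and vice versa --- makes both $v_1$ and $v_2$ into $\mu_{\T,t}$-harmonic functions; moreover $v_1(e) \geq u(w_1)\,\mu_{\T,t}(w_1) > 0$ and $v_2(e) \geq u(w_2)\,\mu_{\T,t}(w_2) > 0$. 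Minimality of $u$ then forces $v_1 = \alpha u$ with $\alpha \in (0,1)$, contradicting $v_1(w_2) = 0 < \alpha u(w_2)$. Hence $u$ is positive at exactly one generator $w^{(1)} \in W$.

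To propagate uniqueness inductively, suppose $u$ is positive on a unique chain $e < w^{(1)} < \cdots < x_n := w^{(1)} \cdots w^{(n)}$ up to length $n$, and pass to the shifted function $\tilde u(y) := u(x_n y)/u(x_n)$, which is positive $\mu_{\T,t}$-harmonic. I would verify that $\tilde u$ inherits minimality from $u$: a decomposition $\tilde u = \tilde v_1 + \tilde v_2$ lifts to $u = V_1 + V_2$ by setting $V_i(y) := u(x_n)\tilde v_i(x_n^{-1} y)$ on $y \geq x_n$, $V_i(y) := 0$ on $y$ off both $x_n\F$ and the ray, and defining $V_i(x_k)$ on the ray prefixes ($k < n$) by backward induction from harmonicity. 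The inductive hypothesis (that $u$ vanishes off the ray at lengths $\leq n$) ensures this triangular system is well-posed and automatically yields $V_1 + V_2 = u$ everywhere. Applying the root case to $\tilde u$ produces a unique successor $w^{(n+1)}$, completing the induction. The resulting infinite word $\g := w^{(1)} w^{(2)} \cdots$ then satisfies $\{x : u(x) > 0\} = \{x : x \leq \g\}$. The main technical hurdle is the lift itself: verifying that the backward-induction coefficients stay in $[0,1]$ and that $V_i$ is harmonic across the ray prefixes, both of which follow from the harmonicity of $u$ and $\tilde u$ combined with the inductive description of $S$ up to level $n$.
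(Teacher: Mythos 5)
Your proof is correct, and while its engine is the same as the paper's --- exploiting the absence of cancellation in $\F$ to truncate a harmonic function to the cone above a chosen vertex, restoring harmonicity along the prefix chain by a backward induction, and then invoking minimality --- the induction is organized genuinely differently. The paper picks at each stage a generator $g_{n+1}$ with $u(g_1\cdots g_{n+1})>0$ and applies the truncation directly to $u$, defining $u_{n+1}$ supported on the cone over $g_1\cdots g_{n+1}$ together with the chain of prefixes, so that minimality yields $u=u_{n+1}$ outright; no statement about shifts of $u$ is ever needed. You instead prove uniqueness of the positive successor at the root (via the two-generator contradiction) and transport it along the ray by showing that the normalized shift $\tilde u(y)=u(x_n y)/u(x_n)$ is again minimal; since left translation in a semigroup is not invertible, this inheritance is not automatic, and your lift $V_i$ of a decomposition of $\tilde u$ --- well-posed precisely because the inductive description of the support together with prefix-closedness forces $u$ to vanish off $x_n\F\cup\{x_0,\dots,x_{n-1}\}$ --- is in effect the paper's $k_i$-construction applied to the components $\tilde v_i$ rather than to $u$ itself. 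Your preliminary observation that the positivity set is prefix-closed (via $\mu_{\T,t}^{*n}\ge t^n\mu^{*n}$) is a useful explicit step that the paper leaves implicit. Two cosmetic points: minimality forces $v_1=\alpha u$ with $\alpha=1$ (evaluate at $w_1$), not $\alpha\in(0,1)$, and the contradiction at $w_2$ works for any $\alpha>0$; and in the lift you only need the backward-induction values to be nonnegative and to sum to $u$ on the chain, not to lie in $[0,1]$ --- both are automatic from harmonicity of $u$ and nonnegativity of $\tilde v_i$, so the ``technical hurdle'' you flag is genuinely harmless.
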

\begin{proof}
Assume that $u(e)=1$. Since $u$ is $\mu_{\T,t}$--harmonic, if it vanishes on the generating set $W$, it will be identically zero. Choose $g_1\in W$ such that $u(g_1)>0$, and define 
$$
u_1(x)=\left\{
	\begin{array}{ll}
		u(x)  & \mbox{if } g_1 \leq x \\
		k_1 & x=e\\
		0 & \mbox{otherwise }
	\end{array}
\right.
$$
where $k_1=\sum_{y}u(g_1y)\mu_{\T,t}(g_1y)$. It is evident that $u_1 \le u$ and that $u_1$ is $\mu_{\T,t}$--harmonic. Since $u$ is minimal, it follows that there exists $c_1>0$ such that $c_1u_1=u$. Since $u_1(g_1)=u(g_1)$, we conclude that $u_1=u$. 
The argument can be continued by induction. More precisely, set $g_0=e$ and assume by induction that $g_1, g_2,\cdots,g_n \in W$ are such that $u(x)$ is strictly positive whenever $g_1\cdots g_n\leq x$ or $x=g_0\cdots g_i$ for some $ 0\leq i\leq n$.  Therefore,  we have $0<u(g_1\cdots g_n)=\sum_{g}u(g_1\cdots g_ng)\mu_{\T,t}(g)$, hence there exists $g_{n+1}\in W$ such that $u(g_1\cdots g_ng_{n+1})\mu(g_{n+1})>0$. Define 
  $$
u_{n+1}(x)=\left\{
	\begin{array}{ll}
		u(x)  & \mbox{if } g_1\cdots g_{n+1} \leq x\\
		k_{i+1} & x=g_1\cdots g_i \mbox{ for some }i=0,\cdots n\\
		0 & \mbox{otherwise }
	\end{array}
\right.
$$
$k_i$ can be chosen such that $u_{n+1}$ becomes a $\mu_{\T,t}$--harmonic and $u_{n+1}\leq u$. Hence, $u=u_{n+1}$. Therefore, there exists an infinite word $\g=(g_n)_{n\geq1}$ such that $u>0$ only on the words which lie on $\g$.
\end{proof}

For a fixed infinite word $\g=(g_n)_{n\geq1}$,  let $u^{\g}$ be the minimal $\mu$--harmonic function associated to $\g$, which means 
$$
u^{\g}(x)=K_{\mu}(x,\g)=\left\{
	\begin{array}{ll}
		\frac{1}{\mu^{|x|}(x)}  & \mbox{if } x \leq \g \\
		0 & \mbox{otherwise }
	\end{array}
\right.
$$
A simple calculation shows $u^{\g}$ is a $\mu_{\T,t}$--harmonic. We are going to show that $u^{\g}$ is also minimal with respect to the random walk $\mu_{\T,t}$.

\begin{lem}\label{lem:Z markov}
Consider a Markov chain on $\Z_+$ with transition function $q$ which satisfies the following two properties:
\begin{enumerate}
\item There exists a positive integer $M$  such that $\sum_{i=1}^Mq(n,n+i)=1$  for every $n \ge 0$.
\item There exists $t>0$ such that for all $n \ge 0$, we have $q(n,n+1)\geq t$
\end{enumerate}
Then every positive $q$-harmonic function is constant.
\end{lem}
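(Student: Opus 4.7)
\emph{Proof proposal.} The plan is to prove the equivalent statement that any nonnegative $q$-harmonic function $\tilde h$ with $\inf_n \tilde h(n) = 0$ must vanish identically; applied to $\tilde h := h - \inf_n h(n)$, which is again nonnegative and harmonic because constants are harmonic, this gives $h \equiv \inf h$. Two one-step inequalities drive everything. A forward bound comes from $\tilde h(n) \geq q(n,n+1)\, \tilde h(n+1) \geq t\, \tilde h(n+1)$, which by iteration yields $\tilde h(n+j) \leq \tilde h(n)/t^j$ for every $j \geq 0$. A backward (maximum) bound comes from the convex combination $\tilde h(n) = \sum_{i=1}^M q(n,n+i)\, \tilde h(n+i)$, which gives $\tilde h(n) \leq \max_{1 \leq i \leq M} \tilde h(n+i)$; in particular, any uniform upper bound valid on a window of $M$ consecutive integers automatically extends, with the same constant, to the integer just to the left.

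First I would dispose of the case where $\tilde h$ vanishes at some $n_0$: the harmonic equation at $n_0$ combined with $q(n_0,n_0+1) \geq t > 0$ forces $\tilde h(n_0+1) = 0$, and iteration yields $\tilde h \equiv 0$ for every index $\geq n_0$; reading the harmonic equation at smaller indices then propagates the zero leftward too. So from now on I may assume $\tilde h > 0$ everywhere, which means $\inf \tilde h = 0$ is not attained, and I pick a strictly increasing sequence $n_k \to \infty$ with $\tilde h(n_k) \to 0$. Setting $C_k := \tilde h(n_k)/t^{M-1}$, the forward bound yields $\tilde h \leq C_k$ on the length-$M$ block $\{n_k, n_k+1, \dots, n_k+M-1\}$.

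The core step is to apply the backward bound iteratively to extend the estimate $\tilde h \leq C_k$ to every index $n \leq n_k + M - 1$. At each step, the maximum principle dominates $\tilde h(m-1)$ by the already-established $C_k$-bound on the $M$ consecutive integers $m, m+1, \dots, m+M-1$, and the enlarged block $\{m-1, m, \dots, m+M-2\}$ is again a length-$M$ window with the same constant, so the induction closes cleanly all the way down to the origin. Fixing any $n$ and choosing $k$ large enough that $n_k \geq n$ then gives $\tilde h(n) \leq C_k = \tilde h(n_k)/t^{M-1} \to 0$, forcing $\tilde h(n) = 0$ and contradicting $\tilde h > 0$.

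The delicate point will be precisely this backward induction: one must check carefully that the constant $C_k$ does not deteriorate as it is propagated leftward through up to $n_k$ steps, and that at each step the $M$-integer window required by the maximum principle is exactly the one just dominated at the previous step. Once this is in place, the lemma follows by simply assembling the forward and backward inequalities above.
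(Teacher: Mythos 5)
Your proof is correct, and it reaches the conclusion by a genuinely different route than the paper. The paper normalizes $f(0)=1$, builds a sequence $a_1<a_2<\cdots$ with gaps at most $M$ along which $f\geq 1$ (using the same convex-combination observation you call the backward/maximum bound), and combines it with the one-step Harnack inequality $f(k)\geq t\,f(k+1)$ to get the two-sided bound $t^{M}\leq f\leq t^{-M}$; constancy is then deduced \emph{not} directly, but by first concluding that every \emph{minimal} $q$-harmonic function is constant and then invoking the Poisson--Martin integral representation of positive harmonic functions. Your argument shares the same two one-step inequalities but closes the proof in an elementary, self-contained way: subtract $\inf h$ (legitimate, since the rows of $q$ sum to $1$, so constants are harmonic), handle the case where the infimum is attained by propagating the zero forward via $q(n,n+1)\geq t$ and backward via the harmonic identity, and otherwise take $n_k\to\infty$ with $\tilde h(n_k)\to 0$, spread the smallness over the block $\{n_k,\dots,n_k+M-1\}$ by the forward bound (costing only the fixed factor $t^{-(M-1)}$), and propagate it leftward without loss by the maximum principle, since the convex weights sum to $1$; letting $k\to\infty$ kills $\tilde h$ at every site. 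What your route buys is independence from the representation theory (minimal functions, Martin/Poisson integral formula) for this auxiliary chain, which the paper's proof implicitly relies on for its final step; what the paper's route buys is the explicit uniform Harnack-type bound $t^{M}\leq f/f(0)\leq t^{-M}$, which is of independent interest, with constancy then following from general structure theory. Your backward induction is sound as stated: the window $\{m,\dots,m+M-1\}$ dominating $\tilde h(m-1)$ is exactly the one certified at the previous step, and the constant $C_k$ never deteriorates because the transition weights form a probability vector.
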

\begin{proof}
Let $f$ be a positive $q$--harmonic function with $f(0)=1$. First, we claim that $t^M\leq f\leq t^{-M}$. This, in particular, shows that when $f$ is a minimal $q$--harmonic function, then $f=ct^M$ for some $c>0$, hence every minimal $q$--harmonic function is constant.  Consequently, by the Poisson representation formula, every positive harmonic functions is constant. 

Since $f>0$ is $q$--harmonic, we have
$$
f(n)=\sum_{i=1}^{M}f(n+i)q(n,n+i),
$$
and $f(0)=1$. We will  show that there exists a strictly increasing sequence $(a_n)_{n\geq1}$ such that $a_{n+1}-a_{n} \le M$ and $f(a_n) \ge 1$ for all $n \ge 1$. We will construct the 
sequence inductively. Set $a_1=0$, and suppose $a_1< \dots < a_m$ have been constructed. 
Since $f$ is $q$--harmonic and $f(a_m)\geq1$, there should exists $1\leq i\leq M$ such that $f(a_m+i)\geq 1$. Set $
a_{m+1}=a_m +i$ and obviously $a_{m+1}-a_m \le M$. 
We will now prove that $f(n) \geq t^M$ for all $n \ge 0$.  Thanks to the construction of the sequence $a_m$, for any $n \ge 1$, there exists $m$ such that $ a_m \leq n < a_{m}+M$.
This implies that  there must exist $1 \le j\le  M$, such that $n+j=a_{m+1}$. Note that for every $k$, we have $f(k)\geq t f(k+1)$. Applying this to $n$ repeatedly, we have 
\[ f(n) \geq t^{j} f(n+j)=t^{j} f(a_{m+1})\geq t^M. \]
Mimicking the same argument, we can show that $f\leq t^{-M}$. Hence, the claim follows.  

\end{proof}

\begin{theorem}\label{thm:min}
Let $\mu$ be a probability measure distributed on the generators of a free semigroup. 
If $\T$ is a bounded randomized stopping time, then a positive $\mu_{\T,t}$--harmonic function $u$ is minimal if and only if there exists an infinite word $\g$ such that $u=u^{\g}$. 
\end{theorem}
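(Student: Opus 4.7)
The plan is to reduce both implications of the theorem to the one-dimensional statement already established in Lemma~\ref{lem:Z markov}. The boundedness hypothesis on $\T$, say $\T\le M$ almost surely, guarantees that $\mu_{\T,t}$ is supported on words of length at most $M$, while the positive weight $t>0$ assigned to $\mu$ in $\mu_{\T,t}=t\mu+(1-t)\mu_{\T}$ will supply the required uniform lower bound on one-step transition probabilities.

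For the forward implication, Theorem~\ref{thm : geodesic minimal} already supplies an infinite word $\g=(g_n)_{n\ge 1}$ such that $u>0$ precisely along the geodesic $g_1\cdots g_n$. I would then pass to the rescaling $v(n):=u(g_1\cdots g_n)/u^{\g}(g_1\cdots g_n)=u(g_1\cdots g_n)\mu^n(g_1\cdots g_n)$. Unpacking the $\mu_{\T,t}$-harmonicity of $u$ at $g_1\cdots g_n$, only neighbours of the form $g_1\cdots g_{n+k}$ with $1\le k\le M$ contribute, and the rescaling leads to the recurrence
$$
v(n)=\sum_{k=1}^{M}q(n,n+k)\,v(n+k), \qquad q(n,n+k):=\frac{\mu_{\T,t}(g_{n+1}\cdots g_{n+k})}{\mu^{k}(g_{n+1}\cdots g_{n+k})}.
$$
The normalization $\sum_{k=1}^{M}q(n,n+k)=1$ is just the (already verified) $\mu_{\T,t}$-harmonicity of $u^{\g}$ applied to the constant function $1$; the lower bound $q(n,n+1)\ge t$ follows from $\mu_{\T,t}(g_{n+1})\ge t\,\mu(g_{n+1})$. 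Hence $q$ is a genuine transition function on $\Z_{+}$ meeting both hypotheses of Lemma~\ref{lem:Z markov}, which forces $v$ to be constant, and therefore $u=c\,u^{\g}$.

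For the reverse implication, the same scheme shows that $u^{\g}$ is itself minimal. Suppose $u^{\g}=f_{1}+f_{2}$ with $f_{1},f_{2}$ positive $\mu_{\T,t}$-harmonic. Since $0\le f_{i}\le u^{\g}$ and $u^{\g}$ vanishes off the geodesic of $\g$, so does each $f_{i}$; the identical recurrence then applies to $v_{i}(n):=f_{i}(g_{1}\cdots g_{n})\,\mu^{n}(g_{1}\cdots g_{n})$ in place of $v(n)$. Lemma~\ref{lem:Z markov} again forces each $v_{i}$ to be constant, so each $f_{i}$ is proportional to $u^{\g}$, which is exactly minimality.

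The main technical point, and the step where the hypotheses of the theorem really get used, is verifying that the Markov chain defined by $q$ on $\Z_{+}$ lies in the scope of Lemma~\ref{lem:Z markov}. The bounded-jump hypothesis requires $\T$ to be bounded (without this, $\mu_{\T,t}$ could have words of arbitrarily large length in its support and no finite $M$ would work); the uniform lower bound on $q(n,n+1)$ requires $t>0$ (without this, the chain on $\Z_{+}$ could support nonconstant positive harmonic functions, and $u^{\g}$ would typically fail to be minimal). So both assumptions feed directly into conditions (1) and (2) of Lemma~\ref{lem:Z markov}, and once those are in place the whole theorem is immediate.
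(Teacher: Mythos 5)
Your proof is correct and takes essentially the same route as the paper's: both arguments use Theorem~\ref{thm : geodesic minimal} to confine everything to a geodesic $\g$, rescale by $u^{\g}$ (the paper's conditional walk $p^{\g}$ is exactly your chain $q$ on $\Z_+$, with bounded jumps from $\T\le M$ and $q(n,n+1)\ge t$ from the $t\mu$ summand), and then invoke Lemma~\ref{lem:Z markov}. The only difference is organizational: you check minimality of $u^{\g}$ by decomposing it directly and you bypass the Martin-kernel representation $u=K_{\mu_{\T,t}}(\cdot,\xi)$ in the forward direction, which is a harmless streamlining of the paper's presentation.
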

\begin{proof}
Fix an infinite word $\g$. First, we will show that $u^{\g}$ is  minimal. It suffices 
to prove that the bounded harmonic functions with respect to the conditional random walk 
$$p^{\g}(x,xy)=\mu_{\T,t}(y)\frac{u^{\g}(xy)}{u^{\g}(x)}=\frac{\mu_{\T,t}(y)}{\mu^{|y|}(y)}$$ for $x<\g$ and $xy<\g$ are constant. Let $M$ be the maximum of the randomized stopping $\T$. Note that the conditional random walk $p^{\g}$ can be identified as a Markov chain on $\Z+$ which satisfies the conditions in Lemma~\ref{lem:Z markov}. This implies that $u^{\g}$ is a minimal $\mu_{\T,t}$--harmonic function. 

Now, let $u$  be a minimal $\mu_{\T,t}$--harmonic function. There should exist $\xi$ in the Martin boundary 
such that $u=K_{\mu_{\T,t}}(.,\xi)$.  Define 
$$
h(x,\xi)=K_{\mu_{\T,t}}(x,\xi)\mu^{|x|}(x).
$$ 
Since $K_{\mu_{\T,t}}(.,\xi)$ is minimal, by Theorem~\ref{thm : geodesic minimal}, there exists an infinite word $\g$ such that $h(x,\xi)>0$ if and only if $x<\g$. Because $K_{\mu_{\T,t}}(.,\xi)$ is $\mu_{\T,t}$--harmonic, we have $h(x,\xi)$ is a positive harmonic function with respect to the conditional random walk $p^{\g}$, by the first part, $h(x,\xi)$ should be constant for any $x<\g$. We have $h(x,\xi)=h(e,\xi)=K_{\mu_{\T,t}}(e,\g)=1$ for $x<\g$, consequently, 
$K_{\mu_{\T,t}}(.,\xi)=u^{\g}$. 
\end{proof}

\begin{cor}\label{cor: freesemigroup}
Let $\mu$ be a probability measure distributed on the generators of a free semigroup. If $\T$ is a  bounded randomized stopping time and $\mu_{\T,t}=t\mu+(1-t)\mu_\T$ for some $0<t\leq1$, then $H^+(\F,\mu)=H^+(\F,\mu_{\T,t})$.
\end{cor}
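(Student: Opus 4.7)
The plan is to reduce the corollary to the characterization of minimal harmonic functions established in Theorem~\ref{thm:min}, together with the Poisson--Martin integral representation.

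First I would observe that for the trivial stopping time $\T\equiv 1$, we have $\mu_{\T,t}=\mu$, so the description of minimal $\mu$--harmonic functions is a special case of Theorem~\ref{thm:min} (and also of Theorem~\ref{minimal}): the minimal $\mu$--harmonic functions are exactly the functions $u^{\g}$ for $\g\in \F^{\infty}$. Meanwhile, for a general bounded randomized stopping time $\T$, Theorem~\ref{thm:min} identifies the minimal $\mu_{\T,t}$--harmonic functions as the same family $\{u^{\g}:\g\in\F^{\infty}\}$. In particular, for each infinite word $\g$ the single function $u^{\g}(x)=\mathbf{1}_{x\le\g}/\mu^{|x|}(x)$ is \emph{simultaneously} a minimal $\mu$--harmonic function and a minimal $\mu_{\T,t}$--harmonic function.

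Next I would invoke the Poisson--Martin representation: every positive $\mu$--harmonic function $f$ admits a unique representation $f(x)=\int_{\pa_m\F}K_\mu(x,\xi)\,d\nu_f(\xi)$, and analogously for $\mu_{\T,t}$. Under the identification of the minimal boundary with $\F^{\infty}$ supplied by Theorem~\ref{thm:min}, both representations take the form $f(x)=\int_{\F^{\infty}}u^{\g}(x)\,d\nu(\g)$ over the same family of kernels. Since the integrand is identical in both cases, the set of functions realizable as such integrals does not depend on whether one works with $\mu$ or with $\mu_{\T,t}$. Consequently $H^{+}(\F,\mu)=H^{+}(\F,\mu_{\T,t})$.

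The only step that requires care is checking that a positive integral $f=\int u^{\g}\,d\nu$ is automatically harmonic for both measures; this is straightforward because each $u^{\g}$ is both $\mu$--harmonic and $\mu_{\T,t}$--harmonic (this dual harmonicity was verified within the proofs of Theorems~\ref{minimal} and~\ref{thm:min}), and one may interchange the summation in $P^{\mu}f$ or $P^{\mu_{\T,t}}f$ with the integral against $\nu$ by Fubini/Tonelli, using positivity. I do not expect any serious obstacle: the corollary is essentially an immediate consequence of the fact that the two random walks share the same minimal Martin boundary and the same minimal kernels.
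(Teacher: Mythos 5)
Your argument is correct and matches the paper's intended route: the corollary is stated as an immediate consequence of Theorem~\ref{thm:min} (together with Theorem~\ref{minimal} for $\mu$ itself), namely that both measures have the same family $\{u^{\g}:\g\in\F^{\infty}\}$ of minimal harmonic functions, and the Poisson--Martin integral representation then identifies the two cones of positive harmonic functions. Your extra check of dual harmonicity of the integrals via Tonelli is the standard (implicit) step and raises no issues.
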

Using the proof of Theorem~\ref{thm:min}, we can describe the Martin boundary of the random walk $(\F,\mu_{\T,t})$:
\begin{cor}
Let $\mu$ and $\T$ be the same as the preceding theorem. Then, the Martin boundary of the random walk $(\F,\mu_{\T,t})$ is homeomorphic to the geometric boundary of $\F$.
\end{cor}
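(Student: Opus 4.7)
The plan is to mirror the proof of Theorem~\ref{thm:min}, constructing an explicit homeomorphism from the geometric boundary $\hat\F$ to the Martin compactification of $(\F,\mu_{\T,t})$. First, equation \eqref{eq:green T,t} together with translation invariance gives, for $x\leq y$ in $\F$, $G_{\mu_{\T,t}}(x,y)=c(x^{-1}y)\mu^{|x^{-1}y|}(x^{-1}y)$, so
$$
K_{\mu_{\T,t}}(x,y)=\frac{c(x^{-1}y)}{c(y)\,\mu^{|x|}(x)}\quad\text{whenever }x\leq y,
$$
and $K_{\mu_{\T,t}}(x,y)=0$ otherwise. I would then define a candidate extension $\Psi$ of the identity on $\F$ to $\hat\F$: each $\g\in\F^{\infty}$ is sent to the minimal harmonic function $u^{\g}$ supplied by Theorem~\ref{thm:min}, and (in the infinite-rank case) each $x^{*}\in\F^{*}$ is sent to the function $z\mapsto \mathbf{1}[z\leq x]/\mu^{|z|}(z)$.

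The main step is continuity of $\Psi$. For $y_n\to\g\in\F^{\infty}$, eventually $y_n=g_1\cdots g_{|y_n|}$, and for $z\leq\g$,
$$
\mu^{|z|}(z)\,K_{\mu_{\T,t}}(z,y_n)=\frac{c(g_{|z|+1}\cdots g_{|y_n|})}{c(g_{1}\cdots g_{|y_n|})}
$$
is precisely the Martin kernel, at the states $(|z|,|y_n|)$, of the $h$-transformed chain $p^{\g}$ on $\Z_+$ studied in the proof of Theorem~\ref{thm:min}. Applying Lemma~\ref{lem:Z markov} with $M$ equal to the bound on $\T$, every positive $p^{\g}$-harmonic function is constant, so the Martin boundary of $p^{\g}$ collapses to a single point (any boundary Martin kernel is a positive harmonic function taking value $1$ at the origin, hence identically $1$). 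Thus $c(g_{|z|+1}\cdots g_m)/c(g_1\cdots g_m)\to 1$ as $m\to\infty$, giving $K_{\mu_{\T,t}}(z,y_n)\to u^{\g}(z)$. For $y_n\to x^{*}\in\F^{*}$ in the infinite-rank case, if $z$ is incompatible with $x$ or strictly extends $x$, the definition of convergence to $x^{*}$ makes $z\leq y_n$ fail for all but finitely many $n$, so $K_{\mu_{\T,t}}(z,y_n)$ is eventually $0$, matching the candidate value; the essential case is $z\leq x$, where one needs $c(z^{-1}y_n)/c(y_n)\to 1$ for sequences whose tails $x^{-1}y_n$ spread across distinct generators of $W$.

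Injectivity of $\Psi$ is immediate, since distinct boundary points produce candidate functions with different supports $\{z:K(z,\xi)\neq 0\}$ (an infinite chain of prefixes for $\g\in\F^{\infty}$, a finite chain of prefixes for $x^{*}\in\F^{*}$); combined with compactness of both $\hat\F$ and the Martin compactification, a continuous injection upgrades to the desired homeomorphism. The main obstacle is the uniform convergence needed in the $\F^{*}$ case: although $c(z^{-1}y_n)/c(y_n)$ still equals the Martin kernel of $p^{\g_n}$ at $(|z|,|y_n|)$ for any infinite extension $\g_n$ of $y_n$, the extensions vary with $n$, so the single-$\g$ conclusion of Lemma~\ref{lem:Z markov} must be promoted to a statement uniform in $\g$. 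I would attack this via the uniform inequalities $t^{M}\leq f\leq t^{-M}$ for positive harmonic functions of the conditional chains, established inside the proof of Lemma~\ref{lem:Z markov}, which should let a compactness/renewal argument on the length-$M$ look-ahead windows convert the per-$\g$ convergence into the uniform statement required.
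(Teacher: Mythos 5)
Your plan for the $\F^{\infty}$ part is essentially the argument the paper intends (the paper itself gives no details beyond ``use the proof of Theorem~\ref{thm:min}''): writing $K_{\mu_{\T,t}}(z,y)=c(z^{-1}y)/(c(y)\mu^{|z|}(z))$ and recognizing $c(z^{-1}y)/c(y)$ as a Martin kernel of the conditional chain $p^{\g}$ on $\Z_+$, so that Lemma~\ref{lem:Z markov} forces the limit $1$ along the geodesic, is the right mechanism. Two smaller points there: you assert that boundary Martin kernels of $p^{\g}$ are harmonic, which does need a (short) justification — it holds because each row of $p^{\g}$ has at most $M$ nonzero entries and $K^{p^{\g}}(j,m)\le t^{-j}$, so subsequential limits are harmonic and hence $\equiv 1$; and even in the finitely generated case a sequence $y_n\to\g$ need not lie on $\g$, so already there one needs the kind of uniformity-over-tails argument you sketch (it can be made to work, since the first-entry distribution into a window of width $M$ below the branch point depends only on the common prefix and the exit probabilities from that window are comparable within a factor $t^{M-1}$).

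The genuine gap is the $\F^{*}$ case, and it cannot be closed by upgrading Lemma~\ref{lem:Z markov} to a statement uniform in $\g$, because the convergence you need is false. Take $W$ infinite, $\T\equiv 2$ (a bounded deterministic stopping time), so $\mu_{\T,t}=t\mu+(1-t)\mu^{*2}$ and $c(v)=u(|v|)$, where $u$ is the renewal sequence of the step law $t\delta_1+(1-t)\delta_2$: $u(0)=1$, $u(1)=t$, $u(2)=1-t+t^2$, $u(3)=2t-2t^2+t^3$. For distinct generators $w_n$, both $y_n=xw_n$ and $y_n'=xw_nw_n$ converge to $x^{*}$ in the geometric compactification, yet $K_{\mu_{\T,t}}(x,y_n)=u(1)/\bigl(u(|x|+1)\mu^{|x|}(x)\bigr)$ while $K_{\mu_{\T,t}}(x,y_n')=u(2)/\bigl(u(|x|+2)\mu^{|x|}(x)\bigr)$; for $|x|=1$ these differ because $u(2)^2-u(1)u(3)=(1-t)^2>0$. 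So the kernels have no limit along a sequence interleaving $y_n$ and $y_n'$, your candidate boundary kernel $\mathbf{1}[z\le x]/\mu^{|z|}(z)$ is not the limit even along $xw_n$, and the Martin boundary contains several distinct points over each improper vertex $x^{*}$. In other words, the map $\Psi$ you propose does not extend continuously to $\F^{*}$, and the statement can only be proved (by your route or any other) for finitely generated $\F$; this restriction is harmless for the rest of the paper, since Corollary~\ref{cor: freesemigroup} and the results on groups use only Theorem~\ref{thm:min}'s classification of minimal harmonic functions, not the boundary homeomorphism.
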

\section{Discrete groups}
This section is devoted to prove the main results.

\subsection{Harmonic functions of discrete groups} The approach of this subsection is similar to the one applied by Kaimanovich and the first author in \cite{BK2013}. 

The following lemma is analogous to the result of Dynkin--Maljutov~\cite{Dynkin-Maljutov61}. Their result is in the context of bounded harmonic functions associated to random walks on discrete groups. However, it can be generalized to positive harmonic functions associated to random walks on semigroups. 

Let $\phi$ be a semigroup homomorphism from a free semigroup $\F$ to a discrete semigroup or group $\Gamma$. A function $\hat{f}$ on $\F$ is called $\phi$--invariant if $\hat{f}(\hat{g}_1)=\hat{f}(\hat{g}_2)$ whenever $\phi(\hat{g}_1)=\phi(\hat{g}_2)$.
\begin{lem}\label{lem:Dynkin-Maljutov}
Consider a surjective semigroup homomorphism  $\phi:\F\to\Gamma$. Let $\hat{\mu}$ and $\mu$, the push--forward of $\hat{\mu}$ under $\phi$, be generating probability measures on $\F$ and $\Gamma$, respectively. Then the map
$$
\begin{array}{c}
H^+(\Gamma,\mu)\to H^+(\F,\hat{\mu})\\
f\mapsto f\circ\phi
\end{array}
$$
is a bijection from  $H^+(\Gamma,\mu)$ onto the space of $\phi$--invariant functions in $H^+(\F,\hat{\mu})$.  
\end{lem}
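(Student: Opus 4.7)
The plan is a direct calculation exploiting surjectivity of $\phi$ together with the fact that $\mu$ is the push-forward $\phi_*\hat\mu$. The first step is to verify that the assignment $f\mapsto f\circ\phi$ lands inside the $\phi$-invariant subspace of $H^+(\F,\hat\mu)$. Invariance is automatic. For the harmonic identity, grouping the sum over $\F$ according to fibers of $\phi$ gives, for any $\hat g\in\F$ with $\phi(\hat g)=g$,
$$
P^{\hat\mu}(f\circ\phi)(\hat g)=\sum_{\hat h\in\F}f(g\,\phi(\hat h))\,\hat\mu(\hat h)=\sum_{h\in\Gamma}f(gh)\Bigl(\sum_{\hat h:\phi(\hat h)=h}\hat\mu(\hat h)\Bigr)=\sum_{h\in\Gamma}f(gh)\mu(h)=f(g),
$$
with Tonelli justifying the rearrangement since all summands are non-negative. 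Absolute convergence on the $\F$-side is inherited from $\mu$-harmonicity of $f$, and positivity is obvious. Injectivity of $f\mapsto f\circ\phi$ then follows immediately from surjectivity of $\phi$.

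For surjectivity onto the $\phi$-invariant elements of $H^+(\F,\hat\mu)$, I would take such an $\hat f$ and define $f:\Gamma\to\RR_{\ge0}$ by $f(g):=\hat f(\hat g)$ for any representative $\hat g\in\phi^{-1}(g)$. This is well-defined precisely by $\phi$-invariance of $\hat f$, and totally defined because $\phi$ is surjective; clearly $f\circ\phi=\hat f$ and $f\ge 0$. To check $P^\mu f=f$, fix $g\in\Gamma$, pick $\hat g\in\phi^{-1}(g)$, and reverse the previous grouping:
$$
P^\mu f(g)=\sum_{h\in\Gamma}f(gh)\mu(h)=\sum_{h\in\Gamma}\sum_{\hat h:\phi(\hat h)=h}\hat f(\hat g\hat h)\,\hat\mu(\hat h)=\sum_{\hat h\in\F}\hat f(\hat g\hat h)\,\hat\mu(\hat h)=\hat f(\hat g)=f(g).
$$

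There is no substantial obstacle in the argument; it is formal bookkeeping powered by positivity and Tonelli. The one point worth flagging is the absolute convergence required in the definition of $\mu$-harmonicity of $f$: this comes for free because the corresponding rearranged sum on $\F$ converges to the finite value $\hat f(\hat g)$, so Tonelli transfers finiteness back to $\Gamma$ before the rearrangement is performed. The fact that $\mu$ is generating on $\Gamma$ (which holds automatically since $\hat\mu$ generates $\F$ and $\phi$ is surjective) is used only to make sure the notion of $\mu$-harmonicity is the standard one; it plays no role in the identification itself.
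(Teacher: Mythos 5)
Your proposal is correct and follows essentially the same route as the paper: grouping the sum over $\F$ along the fibers of $\phi$ (with positivity/Tonelli justifying the rearrangement) to transfer harmonicity in both directions, with injectivity immediate from surjectivity of $\phi$. The paper's proof is the same fiber-decomposition computation, so no further comment is needed.
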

\begin{proof}
Let $\hat{f}$ be  $\phi$--invariant. Obviously, we can define the function $f$ on $\Gamma$ by $f(g)=\hat{f}(\hat{g})$ whenever $\phi(\hat{g})=g$. Assume $\hat{f}$ is $\hat{\mu}$--harmonic. 
For any $x$ in $\Gamma$, we have $\mu(x)=\sum_{\hat{y}\in\phi^{-1}(x)}\hat{\mu}(\hat{y})$.  Hence, 

\begin{equation}\label{eq:1 phi-invariant}
f(gx)\mu(x)=\sum_{\hat{y}\in\phi^{-1}(x)}f(gx)\hat{\mu}(\hat{y})=\sum_{\hat{y}\in\phi^{-1}(x)}f(g\phi(\hat{y}))\hat{\mu}(\hat{y})
\end{equation}
Let $\phi(\hat{g})=g$. Because $\phi$ is a semigroup homomorphism, we can write 
$f(g\phi(\hat{y}))=f(\phi(\hat{g})\phi(\hat{y}))=f(\phi(\hat{g}\hat{y}))=\hat{f}(\hat{g}\hat{y})$, combining it with Equation~\ref{eq:1 phi-invariant} yields to
$$
f(gx)\mu(x)=\sum_{\hat{y}\in\phi^{-1}(x)}\hat{f}(\hat{g}\hat{y})\hat{\mu}(\hat{y}).
$$
Therefore taking summation over all $x$ from both sides of the preceding equation leads us to
$$
\sum_{x\in \Gamma}f(gx)\mu(x)=\sum_{x\in \Gamma}\sum_{\hat{y}\in\phi^{-1}(x)}\hat{f}(\hat{g}\hat{y})\hat{\mu}(\hat{y})=\sum_{\hat{y}\in\F}\hat{f}(\hat{g}\hat{y})\hat{\mu}(\hat{y}).
$$
Since $\hat{f}$ is  $\hat{\mu}$--harmonic, the right hand side is equal to $\hat{f}(\hat{g})=f(g)$. Hence $f$ is $\mu$--harmonic.  Similarly, if $f$ is positive $\mu$--harmonic, then the positive function $\hat{f}=f\circ\phi$ is  $\phi$--invariant and $\hat{\mu}$--harmonic.
\end{proof}

\begin{theorem}\label{iso}
Let $\mu$ be a generating probability measure on a discrete group $\Gamma$, and let $\T$ be a bounded randomized stopping time. Then the space of positive harmonic functions associated with $\mu$  and $\mu_{\T,t}$ coincide.
\end{theorem}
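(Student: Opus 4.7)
The strategy is to reduce the statement for $\Gamma$ to the already-established Corollary~\ref{cor: freesemigroup} on free semigroups by means of Lemma~\ref{lem:Dynkin-Maljutov}. Let $W=\supp(\mu)$ and let $\F$ denote the free semigroup on formal generators $\{\hat{w}_g : g\in W\}$. Define the surjective semigroup homomorphism $\phi : \F\to \Gamma$ by $\phi(\hat{w}_g)=g$, and lift $\mu$ to the probability measure $\hat{\mu}$ on $\F$ supported on the generating set by $\hat{\mu}(\hat{w}_g)=\mu(g)$. By construction $\phi_*\hat{\mu}=\mu$ and $\hat{\mu}$ is distributed on the free generators of $\F$, which places us squarely in the setting of Section~3 (allowing $\F$ to have infinite rank when $\supp(\mu)$ is infinite).

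Next I would lift $\T$ to $\F$. The bounded randomized stopping time $\T$ on $(\Gamma^{\ZZ_+},\p)$ pulls back along the coordinate-wise map induced by $\phi$ to a randomized stopping time $\hat{\T}$ on $(\F^{\ZZ_+},\hat{\p})$ via $\hat{\T}(\hat{\x},\omega)=\T(\phi(\hat{\x}),\omega)$. Since $\phi$ carries the canonical filtration on $\F^{\ZZ_+}$ into that on $\Gamma^{\ZZ_+}$, this is still a randomized stopping time, and it is bounded by the same constant that bounds $\T$. Because $\phi$ is a homomorphism, the stopped position of the $\F$-walk projects to the stopped position of the $\Gamma$-walk, so $\phi_*\hat{\mu}_{\hat{\T}}=\mu_{\T}$ and hence $\phi_*\hat{\mu}_{\hat{\T},t}=\mu_{\T,t}$, where $\hat{\mu}_{\hat{\T},t}=t\hat{\mu}+(1-t)\hat{\mu}_{\hat{\T}}$.

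Now I would apply Lemma~\ref{lem:Dynkin-Maljutov} twice. It identifies $H^+(\Gamma,\mu)$ with the space of $\phi$-invariant functions in $H^+(\F,\hat{\mu})$, and $H^+(\Gamma,\mu_{\T,t})$ with the space of $\phi$-invariant functions in $H^+(\F,\hat{\mu}_{\hat{\T},t})$. Corollary~\ref{cor: freesemigroup} applied to $(\F,\hat{\mu})$ and the bounded randomized stopping time $\hat{\T}$ tells us that $H^+(\F,\hat{\mu})=H^+(\F,\hat{\mu}_{\hat{\T},t})$ as sets of real-valued functions on $\F$. Since $\phi$-invariance is a property of the function alone and is independent of the underlying measure, the two subsets of $\phi$-invariant elements must also coincide. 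Transporting this equality back to $\Gamma$ through the bijection in Lemma~\ref{lem:Dynkin-Maljutov} gives $H^+(\Gamma,\mu)=H^+(\Gamma,\mu_{\T,t})$.

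The main obstacle I anticipate is the bookkeeping needed to confirm the equality $\phi_*\hat{\mu}_{\hat{\T}}=\mu_{\T}$, especially when $W=\supp(\mu)$ is infinite so that $\F$ has infinite rank and many words in $\F$ may project to the same element of $\Gamma$. One must check that $\mu_{\T}(g)=\sum_{\hat{g}\in\phi^{-1}(g)}\hat{\mu}_{\hat{\T}}(\hat{g})$; this ultimately reduces to a Fubini computation using the fact that increments of $(\F,\hat{\mu})$ project under $\phi$ to $\mu$-distributed increments of $(\Gamma,\mu)$, so that the events $\{\hat{x}_{\hat{\T}(\hat{\x},\omega)}=\hat{g}\}$ partition $\{x_{\T(\x,\omega)}=g\}$ over all lifts $\hat{g}$ of $g$. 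Once this compatibility is in place, the rest of the argument is purely formal.
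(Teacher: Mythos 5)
Your proposal is correct and follows essentially the same route as the paper: lift $\mu$ and the bounded randomized stopping time to the free semigroup on the support of $\mu$, invoke Corollary~\ref{cor: freesemigroup} there, and transport the equality back to $\Gamma$ via the bijection of Lemma~\ref{lem:Dynkin-Maljutov} onto $\phi$--invariant harmonic functions. You merely make explicit the bookkeeping (the pushforward identity $\phi_*\hat{\mu}_{\hat{\T},t}=\mu_{\T,t}$ and the double application of the lemma) that the paper's proof leaves implicit.
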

\begin{proof}
We will show that the harmonic functions on $\Gamma$ arise from harmonic functions on a free semigroup with certain invariance properties. In order to do this, we will use the probability measure $\mu$ on $\Gamma$ to define a probability measure on the free semigroup $\F$ generated freely on the support of $\mu$. More precisely, each element of the support of $\mu$ (viewed merely as a set) is considered as a free independent generator of $\F$. To avoid confusion, we use the notation $ \hat{h}$ for the generator of $\F$ corresponding to $h \in \Gamma $, and set 
$W= \{ \hat{h}: h \in \supp \mu \}$.  Now, define the measure $\m$ by $\m( \hat{h}):=\mu(h)$ for each generator $\hat{h}$ of $\F$.
We can naturally define the map $\phi: \F \to \Gamma$ by $\phi(\hat{g})=g$. In other words, each element of $\F$ is precisely a positive word in $W$, and $\phi(\hat{g})$ is the evaluation of this formal product in group $\Gamma$.  It is easy to verify that $\phi$ is a
surjective semigroup homomorphism, moreover, the image of the probability measure $\m$ is $\mu$.

Assume now that $\T$ is a bounded randomized stopping time for the random walk $(\Gamma ,\mu)$. Since the increment spaces of the random walks $(\F,\m)$ and $(\Gamma,\mu)$ are isomorphic, the randomized stopping time $\T$ can provide us a randomized stopping time with respect to the random walk $(\F,\m)$.
Define the corresponding randomized stopping time $\ta((\hat{x}_n)_{n\geq0},\omega)=\T((\phi(\hat{x}_n))_{n\geq0},\omega)$ for the random walk $(\F,\m)$. By  virtue of Lemma~\ref{lem:Dynkin-Maljutov} and  Corollary~\ref{cor: freesemigroup}, we conclude that $f$ belongs to $H^+(\Gamma ,\mu)$ if and only if  $f$ is in $H^+(\Gamma ,\mu_{\T,t})$. 

\end{proof}
\subsection{Harmonic functions and convex combinations}
By virtue of Theorem~\ref{iso} and Example~\ref{ex : convex}, we can conclude that  if $\mu'$ is a finite convex combination of $\mu$ and $\mu_{\rho_i}$ where $\rho_i$ is a  randomized stopping time for $i=1,\cdots,n$, i.e., 
$$\mu'=a_1\mu+a_2\mu_{\rho_1}+\cdots +a_{n+1}\mu_{\rho_n}$$ 
and $1=a_1+\cdots+a_{n+1}$, then $H^+(\Gamma ,\mu)=H^+(\Gamma ,\mu')$.

We will show that we can relax the condition of finite combinations to an infinite one whenever the probability measures commute under convolution. 
\begin{theorem}\label{convex}
Consider a set of probability measures ${\theta_i}$ on a discrete group $\Gamma$ such that $\theta_1*\theta_i=\theta_i*\theta_1$ for all $i\geq1$. Let  $\theta =\sum_{i}a_i\theta_i$ where $a_i$ is nonnegative for all $i\geq 1$ and $\sum_{i\geq 1} a_i=1$. If $\theta_1$ is generating and $a_1>0$ such that $H^+(\Gamma,\theta_1)\subset H^+(\Gamma,\theta_i)$ for $i\geq 1$, then
$H^+(\Gamma ,\theta)= H^+(\Gamma ,\theta_1)$. 
\end{theorem}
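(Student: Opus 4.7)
The plan splits into the two inclusions of the claimed equality.

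For $H^+(\Gamma, \theta_1) \subseteq H^+(\Gamma, \theta)$: I would take $f \in H^+(\Gamma, \theta_1)$, use the hypothesis to get $f \in H^+(\Gamma, \theta_i)$ for every $i$, and apply Tonelli to the nonnegative double sum, obtaining
\[
P_\theta f(g) = \sum_h f(gh) \sum_i a_i \theta_i(h) = \sum_i a_i f(g) = f(g),
\]
so $f \in H^+(\Gamma, \theta)$.

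For the reverse inclusion I would first reduce to a two-term model by setting $\nu := (1-a_1)^{-1} \sum_{i\ge 2} a_i \theta_i$, which is trivial when $a_1 = 1$. Then $\theta = a_1 \theta_1 + (1-a_1)\nu$, $\nu$ commutes with $\theta_1$, and the hypothesis inherits as $H^+(\Gamma, \theta_1) \subseteq H^+(\Gamma, \nu)$. Commutativity of $\theta_1$ and $\nu$ allows the binomial expansion $\theta^{*n} = \sum_{k=0}^n \binom{n}{k} a_1^k (1-a_1)^{n-k}\, \theta_1^{*k} * \nu^{*(n-k)}$, so for every $n$ and every $f \in H^+(\Gamma, \theta)$,
\[
f = \sum_{k=0}^n \binom{n}{k} a_1^k (1-a_1)^{n-k}\, P_{\theta_1}^k P_\nu^{n-k} f.
\]
This identity is the structural engine of the argument.

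To finish, I would invoke the Choquet--Martin integral representation to reduce to showing that every \emph{minimal} $\theta$-harmonic function $f$ is $\theta_1$-harmonic. Commutativity makes $P_{\theta_1} f$ a $\theta$-harmonic function dominated by $f/a_1$, so minimality forces $P_{\theta_1} f = c_1 f$ and $P_\nu f = c_2 f$ with $a_1 c_1 + (1-a_1) c_2 = 1$; the generating property of $\theta_1$ rules out $c_1 = 0$, and the hypothesis $H^+(\Gamma, \theta_1) \subseteq H^+(\Gamma, \nu)$ should force $c_1 = c_2 = 1$. The hard part will be exactly this last step, since pure eigenvalue bookkeeping does not close the argument on its own. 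My planned route is truncation: the finite combination $\theta^{(N)} := \bigl(\sum_{i\le N} a_i\bigr)^{-1} \sum_{i \le N} a_i \theta_i$ should satisfy $H^+(\Gamma, \theta^{(N)}) = H^+(\Gamma, \theta_1)$ by the finite-combination consequence of Theorem~\ref{iso} (together with Example~\ref{ex : convex} to realize each partial combination), and commutativity should then allow one to pass to the limit $N \to \infty$ in the binomial identity one $\theta_1$-step at a time, so that $\theta_1$-harmonicity is inherited by $f$; controlling this limit uniformly on $\Gamma$ is the delicate technical point.
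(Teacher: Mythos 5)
Your first inclusion and your reduction of the second one to minimal harmonic functions follow the paper's route: the paper also dismisses $H^+(\Gamma,\theta_1)\subseteq H^+(\Gamma,\theta)$ as immediate from the hypothesis, and for the converse it takes a minimal $\theta$-harmonic $u$, writes $u=P^{\theta}u=\sum_i a_iP^{\theta_i}u$, uses $\theta_1*\theta=\theta*\theta_1$ to see that $P^{\theta_1}u$ is again $\theta$-harmonic, and concludes from minimality that $P^{\theta_1}u=u$, i.e.\ that every minimal $\theta$-harmonic function is $\theta_1$-harmonic. Up to the point where you obtain $P^{\theta_1}f=c_1f$ and $P^{\nu}f=c_2f$ with $a_1c_1+(1-a_1)c_2=1$, you are on the same track (your binomial expansion of $\theta^{*n}$ is correct but never used and is dead weight); the paper simply settles the eigenvalue question in that same line by invoking minimality of $u$ directly.

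The genuine gap is exactly the step you flag as ``the hard part'': you never prove $c_1=1$, and the escape route you sketch cannot do it. Theorem~\ref{iso} and Example~\ref{ex : convex} concern measures of the form $\mu_{\rho}$ obtained from a single base measure $\mu$ by bounded randomized stopping times; the abstract measures $\theta_i$ of the present theorem (only assumed to commute with $\theta_1$ and to satisfy $H^+(\Gamma,\theta_1)\subseteq H^+(\Gamma,\theta_i)$) need not arise in this way, so the claim $H^+(\Gamma,\theta^{(N)})=H^+(\Gamma,\theta_1)$ for the truncations $\theta^{(N)}$ is not a ``finite-combination consequence'' of Theorem~\ref{iso} at all — it is precisely a finite instance of the statement you are trying to prove, so the appeal is circular. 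Moreover no actual mechanism is offered for the passage $N\to\infty$: harmonicity does not pass through the kind of termwise limit you describe, and the ``uniform control on $\Gamma$'' you acknowledge needing is nowhere produced. Your instinct that pure eigenvalue bookkeeping does not close the argument is understandable — in the setting of the paper's Corollary, where $\theta_i=\mu^{*i}$, one gets $P^{\mu}u=cu$ and hence $\sum_i a_ic^i=1$, and $a_1>0$ forces $c=1$; in the general setting only $\theta_1$ commutes with the other $\theta_i$, so no analogous relation pinning down $c_1$ is available from your computations. As written, your proposal establishes only the eigenrelation above, which falls short of $H^+(\Gamma,\theta)\subseteq H^+(\Gamma,\theta_1)$, whereas the paper's proof closes this step by reading it off from the minimality of $u$.
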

\begin{proof}
The fact that the probability measure $\theta_1$  is generating and $a_1>0$ implies that $\theta$ is also generating.  Assume that $u$ is a minimal $\theta$--harmonic function. We have 
$$
u=P^{\theta}u=\sum_ia_iP^{\theta_{i}}u.
$$
Since the probability measure $\theta_1$ commutes with $\theta_i$ for all $i\geq1$, we deduce that $\theta_1$ commutes with $\theta$ too, and
$$
P^\theta P^{\theta_1}u=P^{\theta_1}P^\theta u=P^{\theta_1} u,
$$
which means that $P^{\theta_1}u$ is a $\theta$--harmonic function. Since $u$ is minimal, we have $P^{\theta_{1}}u=u$. It follows that each minimal $\theta$--harmonic function is $\theta_1$--harmonic, implying $H^+(\Gamma ,\theta)\subset H^+(\Gamma ,\theta_1)$. The reverse inclusion is clear.
\end{proof}

\begin{cor}
Let $\mu$ be a generating probability measure on a discrete group $\Gamma$.
Let $\mu'=\sum_{i\geq 1}a_i\mu^{*i}$ be a convex combination of convolutions of the probability measure $\mu$. 
If $a_1>0$, then $H^+(\Gamma,\mu)=H^+(\Gamma,\mu')$.
\end{cor}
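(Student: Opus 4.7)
The plan is to deduce this corollary directly from Theorem~\ref{convex} by taking $\theta_i := \mu^{*i}$ for $i \geq 1$. I would first verify each hypothesis of Theorem~\ref{convex} for this choice, and then invoke it.

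First, I would check that $\theta_1 = \mu$ is generating, which is part of the corollary's hypothesis, and that $a_1 > 0$ by assumption. Next, commutativity under convolution is immediate: $\theta_1 * \theta_i = \mu * \mu^{*i} = \mu^{*(i+1)} = \mu^{*i} * \mu = \theta_i * \theta_1$. Summability is given since $\sum_{i \geq 1} a_i = 1$, so $\mu' = \sum_i a_i \theta_i$ matches the form required by Theorem~\ref{convex}.

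The only step requiring a small argument is the inclusion $H^+(\Gamma,\theta_1) \subset H^+(\Gamma,\theta_i)$ for each $i \geq 1$. Let $f \in H^+(\Gamma,\mu)$. Since $f \geq 0$ and $P^\mu f = f$ with absolute convergence, Tonelli's theorem applied iteratively justifies the interchange of sums, giving
\[
P^{\mu^{*i}} f(g) = \sum_{h \in \Gamma} f(gh)\, \mu^{*i}(h) = (P^\mu)^i f(g) = f(g),
\]
so $f$ is $\mu^{*i}$-harmonic with the defining series absolutely convergent. Hence $H^+(\Gamma,\mu) \subset H^+(\Gamma,\mu^{*i})$ as required.

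With all hypotheses of Theorem~\ref{convex} verified, the conclusion $H^+(\Gamma,\mu') = H^+(\Gamma,\mu)$ follows immediately. There is essentially no obstacle here; the only mildly delicate point is confirming absolute convergence of $P^{\mu^{*i}} f$ for a positive $\mu$-harmonic function $f$, which is handled by Tonelli. The substance of the corollary is carried entirely by the preceding Theorem~\ref{convex}, and this corollary is simply the specialization $\theta_i = \mu^{*i}$.
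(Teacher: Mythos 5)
Your proposal is correct and follows exactly the route the paper intends: the corollary is the specialization of Theorem~\ref{convex} to $\theta_i=\mu^{*i}$, with the commutativity and the inclusion $H^+(\Gamma,\mu)\subset H^+(\Gamma,\mu^{*i})$ (via iterating $P^\mu$, justified by Tonelli for positive functions) checked as you do.
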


\subsection*{Acknowledgment} 
The authors wish to use the opportunity to thank Vadim Kaimanovich and 
Wolfgang Woess for numerous valuable suggestions on an earlier version of this paper. We also thank S\'ebastian Gouz\"el and Iddo Ben-Ari for fruitful discussions. During the completion of this project, the second author was partially supported by the DFG grant DI506/14-1. 

\bibliographystyle{alpha}
\bibliography{Martin-bnd}
\end{document}